\subjclass[2010]{Primary: 14F42}
\keywords{Motivic homotopy theory, stable motivic homotopy sheaves}
 \definecolor{dark-red}{rgb}{0.4,0.15,0.15}
\newcommand{\Q}{\mathbb{Q}} 
\newcommand{\QQ}{\Q}
\newcommand{\CC}{\mathbb{C}} 
\renewcommand{\AA}{\mathbb{A}}
\newcommand{\Z}{\mathbb{Z}}
\newcommand{\ZZ}{\mathbb{Z}}
\newcommand{\R}{\mathbb{R}}
\newcommand{\RR}{\R}
\newcommand{\FF}{\mathbb{F}}
\newcommand{\Ff}{\mathscr{F}}
\newcommand{\iso}{\cong}
\newcommand{\Sm}{\mathrm{Sm}}
\newcommand{\sphere}{\mathbbm{1}}
\newcommand{\SH}{\mathrm{SH}}
\newcommand{\ul}[1]{\underline{\smash{#1}}}
\renewcommand{\setminus}{\smallsetminus}
\DeclareMathOperator*{\colim}{\mathrm{colim}}
\DeclareMathOperator*{\holim}{\mathrm{holim}}
\DeclareMathOperator{\Ext}{Ext}
\renewcommand{\top}{{\mathrm{top}}}
\numberwithin{equation}{section} 
\theoremstyle{plain}
\newaliascnt{theorem}{equation}  
\newtheorem{theorem}[theorem]{Theorem}  
 \theoremstyle{definition}
\newaliascnt{prop}{equation}  
\newtheorem{prop}[prop]{Proposition}
\newaliascnt{lemma}{equation}  
\newtheorem{lemma}[lemma]{Lemma}
\newaliascnt{corollary}{equation}  
\newtheorem{corollary}[corollary]{Corollary}
\newaliascnt{claim}{equation}  
\newaliascnt{conjecture}{equation}  
\newaliascnt{question}{equation}  
\newtheorem{question}[question]{Question}
\newaliascnt{defn}{equation}  
\newaliascnt{example}{equation}  
\newtheorem{example}[example]{Example}
\theoremstyle{remark}
\newaliascnt{remark}{equation}  
\newtheorem{remark}[remark]{Remark}
\newaliascnt{convention}{equation}  
\newtheorem{convention}[convention]{Convention}
\newcommand{\aref}[1]{\autoref{#1}}
\renewcommand{\AA}{\mathbb{A}}
\newcommand{\SHA}{\mathrm{SH}^{\AA^1}\!}
\newcommand{\PP}{\mathbb{P}}
\newcommand{\Spec}{\operatorname{Spec}}
\newcommand{\chr}{\operatorname{char}}
\newcommand{\cd}{\operatorname{cd}}
\newcommand{\vcd}{\operatorname{vcd}}
\newcommand{\comp}[1]{^{\widehat{~}}_{#1}}
\newcommand{\ret}{\text{r\'et}}
\newcommand{\MU}{\mathrm{MU}}
\newcommand{\EM}{\mathbf{M}}
\newcommand{\BP}{\mathrm{BP}}
\newcommand{\Shv}{\operatorname{Shv}}
\begin{document}
\title{Vanishing in stable motivic homotopy sheaves}
\author{Kyle Ormsby}
\address{Reed College}
\email{ormsbyk@reed.edu}
\author{Oliver R\"{o}ndigs}
\address{Universit\"{a}t Osnabr\"{u}ck}
\email{oroendig@uni-osnabrueck.de}
\author{Paul Arne {\O}stv{\ae}r}
\address{University of Oslo}
\email{paularne@math.uio.no}
\begin{abstract}
We determine systematic regions in which the bigraded homotopy sheaves of the motivic sphere spectrum vanish.
\end{abstract}

\maketitle

\section{Introduction}\label{sec:intro}
Stable motivic homotopy theory is a nonabelian generalization of the algebro-geometric theory of motives.  It is a natural arena in which to study the motivic cohomology, $K$-theory, and algebraic cobordism of smooth schemes \cite{v:icm}, and its invention was crucial to the resolution of the Milnor and Bloch-Kato conjectures \cite{v:2,v:l}.

The most fundamental objects in the stable motivic homotopy category $\SHA(F)$ (over a field $F$) are the $\PP^1$-suspension spectra $\Sigma^\infty_{\PP^1}U_+$ for $U$ a smooth $F$-scheme.  Distinguished amongst these is the sphere spectrum $\sphere := \Sigma^{\infty}_{\PP^1}\Spec(F)_+$.  We denote this object by $\sphere$ because it is the unit for the symmetric monoidal product on $\SHA(F)$ given by the smash product.

Equivalence between $\PP^1$-spectra is detected by the bigraded homotopy sheaves, $\ul\pi_{m+n\alpha}X$ for $m,n\in\ZZ$, which are defined as the Nisnevich sheafification of the assignment
\[
 U\in \Sm/\Spec(F)\longmapsto [\Sigma^{m+n\alpha} U_+,X]_{\SHA(F)}.
\]
Here $[~,~]_{\SHA(F)}$ denotes the hom-set in $\SHA(F)$ and $\Sigma^{m+n\alpha}$ denotes smashing with $(S^1)^{\wedge m}\wedge(\AA^1\smallsetminus 0)^{\wedge n}$.  Since every motivic spectrum is a $\sphere$-module, the bigraded sheaf
\[
  \ul\pi_\star \sphere = \bigoplus_{m,n\in \ZZ}\ul\pi_{m+n\alpha}\sphere
\]
plays a fundamental role in stable motivic homotopy theory, analogous to the stable homotopy groups of spheres in topology.  We will refer to $\ul\pi_{m+n\alpha}\sphere$ as the \emph{$(m+n\alpha)$-th motivic stable stem}, and to the $\ZZ$-graded sheaf $\ul\pi_{m+*\alpha}\sphere$ as the \emph{$m$-th Milnor-Witt stem}.

The motivic stable stems (and their global sections, $\pi_{m+n\alpha}\sphere := \ul\pi_{m+n\alpha}\sphere(F)$) have been objects of intense study since Morel's analysis of the $0$-th motivic stable stem in \cite{morel:suite}.  That paper launched his program \cite{morel:pi0} to identify the $0$-th Milnor-Witt stem with $\ul K^{MW}_{-*}$, the Milnor-Witt $K$-theory sheaf, explaining the nomenclature.  In further work \cite{morel:conn}, Morel shows that $\sphere$ is connective, meaning that $m$-th Milnor-Witt stems are $0$ for $m<0$.

Beyond Morel's theorems, little is known about Milnor-Witt stems over a general field.  R\"ondigs-Spitzweck-{\O}stv{\ae}r \cite{RSO:pi1} determine the $1$-st Milnor-Witt stem as an extension of $\ul K^M_*/24$ and a certain sheaf related to Hermitian $K$-theory; this vastly generalizes work of Ormsby-{\O}stv{\ae}r \cite{MR3255457} for fields of cohomological dimension less than three.
All other computations are limited to specific fields, and are generally only known on global sections (and frequently after completion at $2$).  Indeed, Hu-Kriz-Ormsby \cite{hko:C} and Dugger-Isaksen \cite{di:mass} make computations over $\CC$ via the Adams-Novikov and Adams spectral sequences, Ormsby \cite{o:thesis} makes computations over $p$-adic fields, Heller-Ormsby \cite{ho:e2m, ho:e2m2} and Dugger-Isaksen \cite{di:RC2,di:lowR} make computations over $\RR$, 
and Wilson-{\O}stv{\ae}r \cite{WO:ff} over finite fields. All these computations hold only in specific (often finite) ranges.

In this paper, we exploit the methods of \cite{RSO:pi1} to find conditions under which the $m$-th Milnor-Witt stem is bounded above; see \aref{thm:integral0} and \aref{thm:integral1}.  Our methods apply to a general field $F$ of characteristic different from $2$, and they result in sheaf level theorems (after inverting the exponential characteristic of $F$).\footnote{So $q=1$ if $\chr F = 0$ and otherwise $q=\chr F$.}

Our vanishing theorems have important implications for the nonzero homotopy sheaves of $\sphere$ via Morel's contraction construction \cite{morel:A1}.  Given a Nisnevich sheaf of abelian groups $\Ff$ on $\Sm/F$, the \emph{contraction} $\omega\Ff$ of $\Ff$ takes $U$ to the kernel of $\Ff(U\times (\AA^1\smallsetminus 0))\to \Ff(U)$.  (Here the map is induced by the canonical section $1:\Spec F\to \AA^1\smallsetminus 0$.)  For any motivic spectrum $E$, we have $\omega\ul\pi_{m+n\alpha}E \cong \ul\pi_{m+(n+1)\alpha}E$.  In particular, if $\ul\pi_{m+n\alpha}E = 0$, then, for $k\ge 1$, the $k$-fold contraction of $\ul\pi_{m+(n-k)\alpha}E$ is $0$.  Future computations should be able to exploit vanishing of $\ul\pi_{m+n\alpha}\sphere$ to constrain the structure of $\ul\pi_{m+\ell\alpha}\sphere$ for $\ell<n$.

We now state our results precisely, giving some indication of our methods along the way.  Fix a field $F$ and let $q$ denote its exponential characteristic.  We begin by studying the $\eta$-complete sphere spectrum via Voevodsky's slice spectral sequence \cite{v:newopen} using the results in \cite{RSO:pi1}, and then ``un-complete'' our results via a sequence of fracture squares.

Let $\eta\in\ul\pi_\alpha \sphere(\Spec F)$ denote the motivic Hopf map induced by the projection $\AA^2\smallsetminus 0\to \PP^1$.  The $\eta$-complete sphere is the motivic spectrum $\hat \sphere = \holim_n \sphere/\eta^n$.  Our first result is the following.

\begin{theorem}\label{thm:main}
Over a field $F$ with exponential characteristic $q\ne 2$,
\[
  \ul\pi_{m+n\alpha}\hat \sphere[1/q] = 0
\]
whenever
\begin{itemize}
\item $m<0$, or
\item $m>0$, $m\equiv 1$ or $2\pmod 4$, and $2n>\max\{3m+5,4m\}$.
\end{itemize}
\end{theorem}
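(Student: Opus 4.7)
The plan is to analyze Voevodsky's slice spectral sequence \cite{v:newopen} converging to the bigraded homotopy sheaves of $\hat\sphere[1/q]$, combining it with Morel's connectivity theorem \cite{morel:conn} and the explicit identification of low slice differentials from \cite{RSO:pi1}. The case $m<0$ is immediate: $\sphere$ is $S^1$-connective by \cite{morel:conn}, and connectivity is preserved by $\eta$-completion and by inverting $q$, so $\ul\pi_{m+n\alpha}\hat\sphere[1/q]=0$ for all $n$.

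For the positive-stem case, I would first record the slice identification
\[
s_w\sphere[1/q] \simeq \Sigma^{2w + w\alpha}\EM(A_w),
\]
where $A_w$ is controlled by the $(2w)$-th line of the topological Adams-Novikov $E_1$-page and is expressible via $\pi_{2w}^{\top}\MU$ and its cooperations. This is due to Levine in characteristic zero and was extended to positive characteristic by Hoyois and Spitzweck after inverting the exponential characteristic. Consequently the $E_1$-page of the slice spectral sequence for $\ul\pi_{m+n\alpha}\hat\sphere[1/q]$ is a sum of shifts of motivic cohomology sheaves, whose vanishing is governed by the Beilinson-Lichtenbaum theorem.

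The core computation is to show $E_1$-vanishing when $m > 0$, $m\equiv 1,2\pmod 4$, and $2n > \max\{3m+5, 4m\}$. Two independent vanishing constraints combine. First, the motivic cohomology sheaves $\ul H^{p,q}(-;\ZZ)$ vanish for $p > q$ over smooth $F$-schemes, which translates into a linear constraint on the triple $(m,n,w)$ and produces the $4m$ half of the bound. Second, the classical Adams-Novikov $E_1$-page carries a well-known vanishing line, and its motivic lift, shifted appropriately by the weight $w$, yields the $3m+5$ half; the congruence hypothesis $m \equiv 1, 2\pmod 4$ ensures one sits strictly above the persistent $\alpha$-family classes at the prime $2$ that would otherwise lie on this boundary.

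To promote $E_1$-vanishing to $E_\infty$-vanishing, I would verify that the target bidegrees admit no incoming nontrivial differentials. The first two slice differentials are identified in \cite{RSO:pi1} with Milnor $Q$-operations and motivic Steenrod squares, and their sources therefore lie in tightly bounded bidegree ranges. Combined with sparsity of the $E_1$-page outside the constrained region and an induction on the weight $w$, this rules out any surviving contributions coming from outside the vanishing region. The main obstacle will be handling the prime $2$ case cleanly, where $\eta$-completion interacts subtly with the slice filtration and where the congruence classes $m \equiv 1, 2\pmod 4$ house precisely the most persistent Adams-Novikov families that the stated bound is designed to just barely avoid; this is where the full force of the differential analysis from \cite{RSO:pi1} will be needed.
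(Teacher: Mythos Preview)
Your plan contains a fundamental misconception about what happens on the slice $E_1$-page in the stated region. The $E_1$-page does \emph{not} vanish there. In the range $2n>\max\{3m+5,4m\}$, the slice summand in bidegree $m+n\alpha$ is governed by $E_2^{n-m,2n}(\MU)$, and the two inequalities are exactly the Andrews--Miller conditions $t<6s-10$ and $t<4s$ under the grading shift $(s,t)=(n-m,2n)$. In that region the Novikov $E_2$-term is isomorphic to $\alpha_1^{-1}E_2^{*,*}(\MU)\cong\FF_2[\alpha_1^{\pm 1},\alpha_3,\alpha_4]/(\alpha_4^2)$, which is nonzero in every column $m\ge 0$. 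So there are plenty of $\EM\FF_2$ summands populating $E_1$ throughout the region, and neither motivic cohomology vanishing nor a Novikov vanishing line will make them disappear. (Your attribution of the $4m$ bound to $\ul H^{p,q}=0$ for $p>q$ is also off: both bounds come from the Andrews--Miller isomorphism range, the $4m$ piece ensuring there are no odd-primary contributions.)

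The actual mechanism, and the step your proposal is missing, is that the vanishing happens on the $E_2$-page. Inside the Andrews--Miller region every slice summand is a suspension of $\EM\FF_2$ indexed by a monomial $\alpha_1^i\alpha_3^j\alpha_4^\varepsilon$, sitting in column $m=2j+3\varepsilon$. The $d_1$ differentials identified in \cite[Lemma 4.2]{RSO:pi1} send the summand for $\alpha_1^i\alpha_3^{2j+1}\alpha_4^\varepsilon$ to that for $\alpha_1^{i+4}\alpha_3^{2j}\alpha_4^\varepsilon$ by multiplication by $\tau$, which is injective on $\ul\pi_\star\EM\FF_2$. This pairs the odd-$j$ columns with the even-$j$ columns and forces $E_2^{m,n,t}=0$ precisely when $m\equiv 1,2\pmod 4$; the columns $m\equiv 0,3\pmod 4$ can and do survive. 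That is the true origin of the congruence hypothesis, not any avoidance of persistent $\alpha$-family classes. Finally, your remark about ``ruling out incoming differentials to promote $E_1$-vanishing to $E_\infty$-vanishing'' is unnecessary in any case: once a group is zero on some page it stays zero.
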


Using the same techniques which prove \aref{thm:main}, we can prove a stronger vanishing result for $\hat \sphere_{(p)}$, the $\eta$-completion of the $p$-local sphere spectrum.  (The notation $\widehat{\sphere_{(p)}}$ might be more appropriate, but we find it unwieldy.)

\begin{theorem}\label{thm:plocal}
Let $F$ be a field and let $p$ be an odd prime different from the characteristic of $F$.  Then
\[
  \ul\pi_{m+n\alpha}\hat \sphere_{(p)} = 0
\]
whenever
\begin{itemize}
\item $m<0$, or
\item $m\ge 0$ and $(p-2)n>(p-1)m$.
\end{itemize}
\end{theorem}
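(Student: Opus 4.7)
The plan is to follow the three-step strategy used for \aref{thm:main}, substituting the odd-primary sparsity of the Brown--Peterson spectrum for the integral analysis.

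\emph{Step 1: Negative stems.} The case $m < 0$ follows from Morel's connectivity theorem \cite{morel:conn} for $\sphere$ after $p$-localization and $\eta$-completion. At an odd prime $p$, the relation $2\eta = 0$ forces $\eta$ to vanish after $p$-localization, so in fact $\hat\sphere_{(p)} \simeq \sphere_{(p)}$ and Morel's bound transfers directly.

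\emph{Step 2: Slice spectral sequence.} For $m \ge 0$, I would run Voevodsky's slice spectral sequence
\[
E_2^{*, *, q} = \ul\pi_{*, *} s_q \hat\sphere_{(p)} \Longrightarrow \ul\pi_{*, *} \hat\sphere_{(p)},
\]
and invoke the identification of the slices of $\sphere_{(p)}$ at odd primes, as used in \cite{RSO:pi1}. Each slice $s_q \sphere_{(p)}$ is equivalent to a wedge of suspensions of $H\ZZ_{(p)}$, indexed by monomials of internal degree $2q$ in the cobar complex for the topological $p$-local Adams--Novikov spectral sequence.

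\emph{Step 3: Odd-primary sparsity.} Because $\BP_*\BP = \ZZ_{(p)}[t_1, t_2, \ldots]$ with $|t_i| = 2(p^i - 1)$, any monomial of internal degree $2q$ has cohomological filtration at most $q/(p-1)$. Combining this with the motivic cohomology vanishing for $H\ZZ_{(p)}$ (which forces any contribution to $\ul\pi_{m+n\alpha}$ to lie in a strict range of bidegrees), and then bookkeeping the $\Sigma^{2q,q}$ weight shifts, one obtains vanishing of every slice contribution in the region $(p-2)n > (p-1)m$.

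\emph{Step 4: Convergence.} The slice differentials shift the slice index $q$ but preserve the motivic bidegree $(m,n)$. Since all slice contributions vanish in the prescribed region, the $E_\infty$-page vanishes, giving the conclusion for $\ul\pi_{m+n\alpha}\hat\sphere_{(p)}$.

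The main obstacle is the bidegree bookkeeping in Step~3: converting the topological ANSS vanishing line, together with the motivic Eilenberg--MacLane vanishing conditions and the $\Sigma^{2q,q}$ weight shift, into the single linear inequality $(p-2)n > (p-1)m$ requires careful tracking. Relative to \aref{thm:main}, the odd-primary sparsity bypasses the $m \pmod 4$ case-split and yields a clean uniform statement.
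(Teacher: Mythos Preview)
Your overall approach matches the paper's: run the $p$-local slice spectral sequence, identify the slices via the $\BP(p)$ Adams--Novikov $E_2$-page, use the vanishing line $E_2^{s,t}(\BP(p)) = 0$ for $t < 2s(p-1)$, and combine with the vanishing range for $\ul\pi_\star\EM A$ to kill the $E_1$-page in the stated region. The paper makes your Step~3 bookkeeping explicit via the linear ``Novikov-to-slice grading shift'' $T(s,t) = (t/2 - s) + (t/2)\alpha$, under which the $\BP(p)$ vanishing region $t < 2s(p-1)$ becomes precisely $\{m+n\alpha \mid (p-2)n > (p-1)m\}$; the fourth-quadrant vanishing of \aref{lemma:EM} then ensures no slice summand outside this region can contribute to a bidegree inside it.

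However, your Step~1 contains a genuine error: the relation $2\eta = 0$ does \emph{not} hold for the motivic Hopf map. The correct relation is $(2+\rho\eta)\eta = 0$ (see \aref{subsec:bachmann}), and over a formally real field $\eta$ remains nonzero and non-nilpotent after inverting $2$---indeed \aref{cor:bachmann} shows $\eta^{-1}\sphere[1/2]$ is nontrivial---so $\hat\sphere_{(p)} \not\simeq \sphere_{(p)}$ for odd $p$ in general. Fortunately this does not break the argument: the $m<0$ case already follows from $E_1$-page vanishing. Every nonzero slice summand $\Sigma^{t-s+t\alpha}\EM E_2^{s,2t}(\BP(p))$ has $t - s \ge 0$ (for $s\ge 1$ the vanishing line gives $t \ge s(p-1) \ge 2s$, and $s=0$ forces $t=0$), so $m - (t-s) < 0$ whenever $m<0$, and \aref{lemma:EM} applies. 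This is exactly how the paper handles the negative range in the proof of \aref{thm:main}; you should replace your appeal to $2\eta=0$ with this observation.
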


While the $\eta$-complete sphere is an interesting object in its own right, one would like to know if there are vanishing regions in $\ul\pi_\star \sphere$ as well.  When the cohomological dimension $\cd F <\infty$, this is known by the following theorem, essentially due to Levine \cite{Levine:convergence}.\footnote{In \cite{Levine:convergence}, Levine proves that the slice spectral sequence for $\sphere$ converges to $\ul\pi_\star\sphere$ when $\cd F<\infty$.  Meanwhile, \cite[Theorem 3.50]{RSO:pi1} identifies the target with $\ul\pi_\star\hat\sphere$, so the completion map $\sphere\to\hat\sphere$ induces an isomorphism $\ul\pi_\star\sphere\cong \ul\pi_\star\hat\sphere$ and thus $\sphere \simeq \hat\sphere$.}

\begin{theorem}\label{thm:cdfinite}
Suppose $F$ is perfect and $\cd F <\infty$.  Then $\sphere\simeq \hat\sphere$.\hfill\qedsymbol
\end{theorem}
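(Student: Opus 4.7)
The plan is to show that the canonical completion map $\sphere \to \hat\sphere$ induces an isomorphism on all bigraded motivic homotopy sheaves $\ul\pi_{m+n\alpha}$, from which the equivalence $\sphere \simeq \hat\sphere$ in $\SHA(F)$ follows. The strategy is to interpose the slice spectral sequence and show it computes both sides, so the map $\sphere \to \hat\sphere$ is an isomorphism of its abutments.

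First I would recall the slice tower of $\sphere$, whose associated slice spectral sequence has $E_1$-page given by the homotopy sheaves of the slices $s_q\sphere$ and converges, a priori only conditionally, to $\ul\pi_\star$ of its homotopy limit. The two inputs to combine are then the following. On one hand, Levine's convergence theorem \cite{Levine:convergence}, which is precisely the place the hypotheses that $F$ is perfect and that $\cd F < \infty$ enter, ensures that the slice spectral sequence strongly converges to $\ul\pi_\star \sphere$ itself: equivalently, $\sphere$ is equivalent to the homotopy limit of its slice tower. On the other hand, \cite[Theorem 3.50]{RSO:pi1} identifies the homotopy limit of the slice tower of $\sphere$ with the $\eta$-completion $\hat\sphere$ on the level of bigraded homotopy sheaves.

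Combining these two identifications, both $\ul\pi_\star\sphere$ and $\ul\pi_\star\hat\sphere$ are canonically computed by the same (strongly convergent) slice spectral sequence, in a way compatible with the map $\sphere \to \hat\sphere$ (which is induced by the universal property of the homotopy limit). Therefore $\sphere \to \hat\sphere$ is an isomorphism on $\ul\pi_{m+n\alpha}$ for all $m,n$, and hence a weak equivalence in $\SHA(F)$.

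The only genuine obstacle is the convergence statement itself, but that is the content of Levine's theorem, which we are allowed to quote. Beyond this, one should verify that the map $\sphere \to \hat\sphere$ is indeed compatible with the slice filtration (since $\eta$ lies in positive slice filtration, $\sphere/\eta^n$ and their homotopy limit inherit a natural slice tower refining the one on $\sphere$), but this compatibility is implicit in the framework of \cite{RSO:pi1}. No further computation is required.
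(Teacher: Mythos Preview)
Your proposal is correct and matches the paper's own argument essentially verbatim: the paper gives no proof beyond the footnote, which cites Levine's convergence theorem \cite{Levine:convergence} for the slice spectral sequence converging to $\ul\pi_\star\sphere$ and \cite[Theorem 3.50]{RSO:pi1} identifying the abutment with $\ul\pi_\star\hat\sphere$, exactly as you do. Your additional remarks on compatibility of $\sphere\to\hat\sphere$ with the slice tower are a reasonable elaboration but not something the paper spells out.
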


From this, we can deduce the following vanishing theorem.

\begin{theorem}\label{thm:integral0}
Suppose $F$ is nonreal with exponential characteristic $q\ne 2$.  If $q>2$, further suppose that $F$ is perfect and of finite cohomological dimension.  Then $\ul\pi_{m+n\alpha}\sphere[1/q]$ vanishes in the range given in \aref{thm:main}.  If $p\ne q$ is an odd prime, then $\ul\pi_{m+n\alpha}\sphere_{(p)}$ vanishes in the range given in \aref{thm:plocal}.
\end{theorem}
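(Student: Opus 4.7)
The plan is to pass from the $\eta$-complete vanishing theorems \aref{thm:main} and \aref{thm:plocal} to vanishing for $\sphere$ itself via the fracture square
\[
\begin{CD}
\sphere @>>> \hat\sphere \\
@VVV @VVV \\
\sphere[\eta^{-1}] @>>> \hat\sphere[\eta^{-1}]
\end{CD}
\]
--- a homotopy pullback whose associated Mayer--Vietoris long exact sequence of bigraded homotopy sheaves recovers $\ul\pi_\star\sphere$ once the other three corners are controlled in the relevant range.

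For $q > 2$, the argument is immediate: the hypotheses that $F$ is perfect with $\cd F < \infty$ let us invoke \aref{thm:cdfinite} to conclude $\sphere \simeq \hat\sphere$, and the desired vanishing transfers from \aref{thm:main} to $\sphere[1/q]$ and from \aref{thm:plocal} to $\sphere_{(p)}$ without further work.

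For $q = 1$ (characteristic zero, with no cohomological-dimension hypothesis available), the plan is to control the two $\eta$-inverted corners over a nonreal field. The key input is the $\eta$-periodic picture: by Pfister's theorem, the Witt ring $W(F)$ of a nonreal $F$ is killed by a power of $2$, so $W(F)_{(p)} = 0$ for every odd prime $p$. Combining this with the identification of $\eta$-periodic stable motivic homotopy with Witt-K-theoretic data (following Bachmann), one expects $\sphere_{(p)}[\eta^{-1}] = 0$, whence the fracture square collapses to $\sphere_{(p)} \simeq \hat\sphere_{(p)}$ and \aref{thm:plocal} applies directly. For the integral statement, $\sphere[\eta^{-1}]$ need not vanish, but its bigraded homotopy sheaves are supported in Milnor--Witt degree $m = 0$, which lies outside both vanishing regimes of \aref{thm:main} ($m < 0$ and $m > 0$). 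Combined with analogous control of $\hat\sphere[\eta^{-1}]$, the Mayer--Vietoris sequence then forces $\ul\pi_{m+n\alpha}\sphere = 0$ in the stated range.

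The main obstacle I foresee lies in characterizing the $\eta$-periodic corners $\sphere[\eta^{-1}]$ and especially $\hat\sphere[\eta^{-1}]$ precisely, particularly in characteristic zero where no cohomological-dimension hypothesis is available; the interaction of the homotopy limit defining $\hat\sphere$ with inversion of $\eta$ is delicate and could in principle produce unwanted contributions outside Milnor--Witt degree $0$. Pinning this down rigorously will likely draw on Bachmann's work on $\eta$-periodic motivic stable homotopy theory, or an adaptation of \cite[Theorem 3.50]{RSO:pi1} to the $\eta$-inverted setting, to ensure that the fracture-square Mayer--Vietoris sequence has the clean form needed for the argument.
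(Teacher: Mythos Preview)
Your positive-characteristic case ($q>2$) is correct and matches the paper: invoke \aref{thm:cdfinite} to get $\sphere\simeq\hat\sphere$ and transfer the vanishing.

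Your characteristic-zero argument, however, has a genuine gap. The claim that for nonreal $F$ the homotopy sheaves of $\eta^{-1}\sphere$ are supported in Milnor--Witt degree $m=0$ is false. Over $\CC$ (nonreal, $\cd=0$), the Andrews--Miller computation already shows that $\eta^{-1}\sphere$ has nonzero $2$-primary homotopy in infinitely many positive $m$ (coming from $\alpha_3,\alpha_4$ under the Novikov-to-slice shift); Bachmann's theorem only kills $\eta^{-1}\sphere$ after inverting~$2$, which leaves the $2$-local part untouched in the integral statement. You might hope to salvage this by noting that the range in \aref{thm:main} only concerns $m\equiv 1,2\pmod 4$, and that $\ul\pi_{m+n\alpha}\eta^{-1}\sphere=\colim_k\ul\pi_{m+(n+k)\alpha}\sphere$ would vanish there if you knew $\ul\pi_{m+N\alpha}\sphere=0$ for $N\gg 0$---but that is exactly what you are trying to prove, so the fracture-square route is circular here. (Your worry about $\eta^{-1}\hat\sphere$ is secondary; the real obstruction is $\eta^{-1}\sphere$ itself.)

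The paper avoids this entirely by a base-change argument rather than a fracture square. Any nonreal characteristic-$0$ field $L$ contains $L_0=\QQ(a_1,\ldots,a_n)$ with $-1=\sum a_i^2$, and one writes $L$ as a filtered colimit of finitely generated extensions of $L_0$, each of which is nonreal with finite $\cd$ by the results in \cite[\S II.4]{serre:gc}. Over those fields \aref{thm:cdfinite} gives $\sphere\simeq\hat\sphere$, and the vanishing then passes to the colimit via an essentially-smooth base-change lemma. The fracture-square method you propose is in fact what the paper uses for the \emph{formally real} case (\aref{thm:integral1}), where the $\eta$-periodic corners are controlled by Bachmann's theorem plus a $\vcd_2$ hypothesis, and where the extra topological condition $\pi_m^\top\sphere[1/2]=0$ is genuinely needed to make it go through.
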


The positive characteristic statement is a direct consequence of \aref{thm:cdfinite}, but the characteristic $0$ nonreal case does not exclude the possibility of infinite cohomological dimension.  We handle this via standard base change methods which we explain in \aref{subsec:base}.

When $F$ is formally real, vanishing in $\ul\pi_\star\sphere$ is more interesting.  We first observe that the $\eta$-primary fracture square
\[\xymatrix{
  \sphere\ar[r]\ar[d] &\hat\sphere\ar[d]\\
  \eta^{-1}\sphere\ar[r] &\eta^{-1}\hat\sphere
}\]
reduces the problem to that of vanishing regions for $\ul\pi_\star \eta^{-1}\sphere$.  We solve this problem using Bachmann's theorem on $\ul\pi_\star \sphere[1/2,1/\eta]$ and the Hu-Kriz-Ormsby comparison of the $2$- and $(2,\eta)$-complete spheres when $F$ has finite virtual $2$-cohomological dimension.\footnote{Recall that $\vcd_2(F):=\cd_2(F(\sqrt{-1}))$ where $\cd_2$ denotes $2$-primary \'etale cohomological dimension.}  In order to state our results, let $\pi_m^\top \sphere$ denote the $m$-th homotopy group of the topological sphere spectrum.

\begin{theorem}\label{thm:integral1}
Suppose $F$ is formally real.  If $m<0$, $\ul\pi_\star \sphere=\ul\pi_\star\sphere_{(p)} = 0$ by Morel's connectivity theorem.  Suppose $m>0$.  Then $\ul\pi_{m+n\alpha}\sphere = 0$ whenever $\ul\pi_{m+n\alpha}\hat \sphere = 0$ (see \aref{thm:main}) and $\pi_m^\top \sphere[1/2] = 0$.  If $p$ is an odd prime, then $\ul\pi_{m+n\alpha}\sphere_{(p)} = 0$ whenever $\ul\pi_{m+n\alpha}\hat \sphere_{(p)} = 0$ (see \aref{thm:plocal}) and $\pi_m^\top \sphere_{(p)} = 0$.
\end{theorem}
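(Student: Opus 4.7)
The plan is to analyze the $\eta$-primary fracture square
\[
\xymatrix{
  \sphere \ar[r]\ar[d] & \hat\sphere \ar[d] \\
  \eta^{-1}\sphere \ar[r] & \eta^{-1}\hat\sphere,
}
\]
which is a homotopy pullback in $\SHA(F)$ because the horizontal fibers coincide (the fiber of $\eta$-completion is $\eta$-local, hence fixed by $\eta$-inversion). Taking bigraded motivic homotopy sheaves produces a Mayer--Vietoris long exact sequence, so vanishing of $\ul\pi_{m+n\alpha}\sphere$ follows once the remaining three corners vanish at the bidegrees $m+n\alpha$ and $(m+1)+n\alpha$. The hypothesis handles the $\hat\sphere$ corner via \aref{thm:main} (or \aref{thm:plocal}), so the task reduces to controlling the $\eta$-inverted corners.

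For the control of $\eta^{-1}\sphere$ and $\eta^{-1}\hat\sphere$, I would invoke Bachmann's theorem identifying $\eta$-inverted motivic homotopy theory with real \'etale homotopy theory. Over a formally real $F$, this presents $\eta^{-1}\sphere[1/2]$ as a sheaf of topological sphere spectra indexed by the space of orderings of $F$. Because $\eta$-inversion collapses the weight grading, the bigraded sheaf $\ul\pi_{m+n\alpha}\eta^{-1}\sphere[1/2]$ depends only on $m$ and is built out of copies of $\pi_m^\top\sphere[1/2]$, which vanishes by hypothesis; the $\eta^{-1}\hat\sphere[1/2]$ side is treated identically. The missing $2$-primary information is supplied by the Hu--Kriz--Ormsby comparison of the $2$- and $(2,\eta)$-complete spheres, valid when $\vcd_2(F)<\infty$ and extended to the general formally real case via sheafification and base change in the spirit of \aref{subsec:base}. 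This joint argument is precisely why the hypothesis of \aref{thm:integral1} splits into a motivic $\hat\sphere$ condition and a topological $\pi_m^\top\sphere[1/2]$ condition.

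For the odd $p$-local version, the situation simplifies dramatically: $\eta$ is $2$-torsion up to units in motivic stable homotopy, so $\eta$ acts nilpotently on $\sphere_{(p)}$ for $p$ odd, and $\eta^{-1}\sphere_{(p)} = 0$. The fracture square collapses to the equivalence $\sphere_{(p)}\simeq\hat\sphere_{(p)}$, and \aref{thm:plocal} concludes the argument; the auxiliary hypothesis $\pi_m^\top\sphere_{(p)} = 0$ is here a sanity check rather than a driving input. The main obstacle is thus the $2$-primary integral analysis: Bachmann's theorem requires inverting $2$, and patching its output with the $2$-completed picture through the Hu--Kriz--Ormsby identification (and its extension beyond finite $\vcd_2$) is the delicate step on which the argument hinges.
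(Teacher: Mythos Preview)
Your overall architecture---the $\eta$-primary fracture square, Bachmann's theorem for the $\eta$-inverted $2$-inverted corner, and the Hu--Kriz--Ormsby comparison for the $2$-complete piece---matches the paper's proof closely.  The paper orders the reductions slightly differently (it first uses \aref{lemma:vcd} to reduce to formally real fields of finite $\vcd_2$ and to homotopy \emph{groups}, and only then runs the three fracture squares; you work with sheaves and defer the base-change extension to the end), but this is a matter of bookkeeping rather than substance.

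There is, however, a genuine error in your treatment of the odd $p$-local case.  The claim that $\eta$ acts nilpotently on $\sphere_{(p)}$ for odd $p$, and hence that $\eta^{-1}\sphere_{(p)}=0$, is false over a formally real field.  The relation $\eta h=0$ with $h=2+\rho\eta$ gives $2\eta=-\rho\eta^2$, so after inverting $2$ one finds $\eta=-\tfrac{1}{2}\rho\eta^2$; this makes $\eta$ infinitely $\eta$-divisible but not nilpotent.  Indeed, \aref{cor:bachmann} shows that $\eta^{-1}\sphere[1/2]$ is the unit of $\SH(\Shv(X_F))[1/2]$, which is nonzero whenever $X_F\neq\varnothing$, i.e.\ whenever $F$ is formally real.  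Localizing further at an odd prime $p$ gives $\eta^{-1}\sphere_{(p)}$ with $\ul\pi_{m+n\alpha}$ built from $\pi_m^\top\sphere_{(p)}$, which is generally nonzero.  This is precisely why the hypothesis $\pi_m^\top\sphere_{(p)}=0$ in \aref{thm:integral1} is a genuine input and not, as you suggest, ``a sanity check rather than a driving input.''  The paper handles the $p$-local case by the same fracture-square analysis as the integral case (``the reader may check that an analogous argument easily covers the $p$-local version''), and you must do the same: replace $\sphere$ by $\sphere_{(p)}$ throughout, use Bachmann to identify $\eta^{-1}\sphere_{(p)}$ with the $p$-local sphere over $X_F$, and invoke $\pi_m^\top\sphere_{(p)}=0$ exactly as in the integral argument.
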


Of course, determining when $\pi_m^\top \sphere[1/2]$ or $\pi_m^\top \sphere_{(p)}$ is $0$ is no easy task.  Nonetheless, we view these conditions as ``reductions to topology'' which effectively transfer the problem from motivic to classical homotopy theory.  Given the intractability of these topological vanishing problems, this is the best type of result we can hope for.

\begin{example}\label{ex:explicit-vanishing}
  Suppose $F$ is formally real.  Toda's calculations say that $\pi_{18}^\top \sphere = \ZZ/8\oplus \ZZ/2$ \cite[p.188]{toda}.
  Since $18\equiv 2 \pmod 4$ and $2\cdot 37>4\cdot 18$, \aref{thm:integral1} implies that $\ul\pi_{18+37\alpha}\sphere = 0$. 
  Ravenel's calculations imply that $\pi_{61}^\top \sphere[1/2] = 0$ \cite[Theorem 1.1.13, A3.4, A3.5, Theorem 4.4.20]{ravenel}. Since $61\equiv 1 \pmod 4$
  and $2\cdot 123 > 4\cdot 61$, one obtains $\ul\pi_{61+123\alpha}\sphere =0$.
\end{example}

\begin{convention}
Henceforth, we always invert the exponential characteristic $q$ of the base field $F$, but we omit this from our notation.  Note that when $F$ is formally real, $q=1$, so our theorems for formally real fields are genuinely integral.
\end{convention}

\subsection*{Outline}
In \aref{sec:prelim}, we collect some necessary facts about the slice and (Adams-)Novikov spectral sequences.  In \aref{sec:eta}, we use the slice spectral sequence to prove \aref{thm:main} and \aref{thm:plocal}.  \aref{sec:uncomp} is split into three subsections.  In \aref{subsec:base}, we review some base change theorems and use them to prove \aref{thm:integral0}.  In \aref{subsec:bachmann}, we review Bachmann's theorem on $\SHA(F)[1/2,1/\eta]$.  In \aref{subsec:integral1}, we use fracture square methods to prove \aref{thm:integral1}.  Finally, in \aref{sec:q}, we pose several open questions in \aref{sec:q} and prove \aref{thm:etainv} which compares the motivic stable stems and $\eta$-periodic motivic stable stems in a range.

\section{Preliminaries}\label{sec:prelim}

In this section, we gather known facts about the slice and Novikov spectral sequences that we will need for our arguments.

We use the slice spectral sequence to prove \aref{thm:main} and \aref{thm:plocal}.  See \cite{v:newopen} for its construction and \cite{RSO:pi1} for a contemporary take on its properties.  The slice spectral sequence for the sphere takes the form
\[
  E_1^{m,n,t} = \ul\pi_{m+n\alpha}s_t \sphere\implies \ul\pi_{m+n\alpha}\hat \sphere
\]
where $s_t\sphere$ is the $t$-th slice of the sphere spectrum \cite[Theorem 3.50]{RSO:pi1}.  By \emph{loc.~cit.}~applied to the admissible pair $(\Spec F, \ZZ_{(p)})$, its $p$-local analogue takes the form
\[
  E_1^{m,n,t}(p) = \ul\pi_{m+n\alpha}s_t\sphere_{(p)}\implies \ul\pi_{m+n\alpha}\hat\sphere_{(p)}
\]
whenever $p\ne \chr F$.  In the case of \aref{thm:plocal}, we prove that $E_1^{m,n,t}(p)$ vanishes in the stated range, implying that $\ul\pi_\star \hat \sphere_{(p)}$ vanishes in the same range.  In the case of \aref{thm:main}, we must work a little harder and show that $E_2^{m,n,t}$ vanishes in appropriate regions.

Both proofs depend crucially on the form which the slices of $\sphere$ take.  Surprisingly, these slices are governed by the $E_2$-page of the Novikov (\emph{i.e.,} $\MU$-Adams) spectral sequence from classical stable homotopy theory.  Let 
\[
  E_2^{s,t}(\MU) = \Ext_{\MU_*\MU}^{s,t}(\MU_*,\MU_*)
\]
denote the cohomology of the $\MU$ Hopf algebroid where $s$ denotes homological degree and $t$ the internal grading on $\MU_*$.  (Note that $\MU_*$ is even-graded, so this group vanishes whenever $t$ is odd.)  Furthermore, let $\EM$ denote the motivic Eilenberg-MacLane functor which takes in an abelian group $A$ and produces the spectrum $\EM A$ representing motivic cohomology with coefficients in $A$.  Using this notation, we get the following theorem due to R\"ondigs-Spitzweck-{\O}stv{\ae}r.

\begin{theorem}[{\cite[Theorem 2.12]{RSO:pi1}}]\label{thm:slice}
The $t$-th slice of the motivic sphere spectrum is
\[
  s_t\sphere = \bigvee_{s\ge 0}\Sigma^{t-s+t\alpha}\EM E_2^{s,2t}(\MU).
\]
\hfill \qedsymbol
\end{theorem}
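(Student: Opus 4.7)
The plan is to leverage the motivic Adams-Novikov resolution of $\sphere$ coming from algebraic cobordism $\mathrm{MGL}$, together with the Hopkins-Morel slice theorem for $\mathrm{MGL}$ (as refined by Hoyois and Spitzweck) and its extension to smash powers. Concretely, the slices of $\sphere$ should be assembled from the slices of the cosimplicial resolution $\sphere \to \mathrm{MGL}^{\wedge(\bullet+1)}$, and the resulting cosimplicial object of motivic Eilenberg-MacLane spectra ought to realize the cobar complex computing Novikov $\Ext$.

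First I would fix $t$ and consider the cosimplicial spectrum $\mathrm{MGL}^{\wedge(\bullet+1)}$ with its augmentation from $\sphere$. The Hopkins-Morel-Hoyois-Spitzweck slice theorem gives $s_t \mathrm{MGL} \simeq \Sigma^{t+t\alpha} \EM(\MU_{2t})$, and an iterated smash-power version identifies
\[
  s_t \mathrm{MGL}^{\wedge(s+1)} \simeq \Sigma^{t+t\alpha} \EM\bigl( \bigl((\MU_*\MU)^{\otimes_{\MU_*} s}\bigr)_{2t} \bigr),
\]
with cosimplicial structure maps corresponding to the cobar differentials for the Hopf algebroid $(\MU_*, \MU_*\MU)$. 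Thus, at the level of each filtration degree $t$, the entire cosimplicial object lives in the world of motivic Eilenberg-MacLane spectra.

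Next I would show that $s_t$ commutes with the (homotopy) totalization of this cosimplicial object. Because each cosimplicial level is a wedge of motivic Eilenberg-MacLane spectra, the totalization splits as a wedge whose summands are computed by the cohomology of the cobar complex, with the $s$-th cosimplicial coordinate contributing an additional $\Sigma^{-s}$ shift in the topological direction. Since by definition the $(s,2t)$-cohomology of the cobar complex is $E_2^{s,2t}(\MU) = \Ext^{s,2t}_{\MU_*\MU}(\MU_*,\MU_*)$, this yields the claimed decomposition $\bigvee_{s\ge 0}\Sigma^{t-s+t\alpha}\EM E_2^{s,2t}(\MU)$.

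The main obstacle I anticipate is verifying that $s_t$ commutes with the Adams-Novikov totalization, or equivalently that the $\mathrm{MGL}$-based motivic Adams-Novikov spectral sequence converges to $s_t \sphere$ termwise with no unresolved extension problems. This requires careful control over the interaction between the slice filtration and the $\mathrm{MGL}$-nilpotent completion of $\sphere$; one needs, for example, that slices are compatible with the appropriate homotopy limit and that the cosimplicial spectral sequence collapses to the stated wedge after passing to slices. Once this compatibility is established, the theorem follows from the generalized Hopkins-Morel slice theorem together with a purely algebraic identification of the slice-level cosimplicial differentials with the cobar differentials.
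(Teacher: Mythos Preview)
The paper does not prove this theorem; it is quoted verbatim from \cite[Theorem 2.12]{RSO:pi1} and marked with a \qedsymbol, so there is no in-paper argument to compare against. Your outline is essentially the strategy carried out in the cited source (building on Levine, Hoyois, and Spitzweck): compute slices of the cosimplicial $\mathrm{MGL}$-resolution via the Hopkins--Morel theorem, identify the resulting cosimplicial Eilenberg--MacLane object with the cobar complex for $(\MU_*,\MU_*\MU)$, and then pass to totalization. The obstacle you flag---that $s_t$ should commute with the relevant homotopy limit and that the cosimplicial object splits as the stated wedge---is exactly where the real work in \cite{RSO:pi1} lies, so your sketch is accurate as a plan but defers the substantive step to that reference rather than resolving it.
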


To further understand the $E_1$-page of the slice spectral sequence, we will need more information on two things:  first, the homotopy sheaves of motivic Eilenberg-MacLane spectra, and second, structural properties of $E_2^{s,t}(\MU)$.

\begin{lemma}[{\cite[Corollary 3.2.1]{SV}}]\label{lemma:EM}
Suppose $A$ is a finitely generated abelian group.  Then
\[
  \ul\pi_{m+n\alpha}\EM A = 0
\]
for $m<0$, or $m=0$ and $n>0$, or $m>0$ and $n>-1$.
\end{lemma}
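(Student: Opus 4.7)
The plan is to reinterpret $\ul\pi_{m+n\alpha}\EM A$ as a motivic cohomology sheaf and then invoke the classical vanishings for such sheaves.  Since $\EM A$ represents motivic cohomology with coefficients in $A$, and the bigraded sphere $S^{m+n\alpha}=(S^1)^{\wedge m}\wedge\G_m^{\wedge n}$ coincides with the sphere $S^{m+n,n}$ in the standard motivic bigrading $S^{p,q}=S^{p-q}\wedge\G_m^{\wedge q}$, we get a natural isomorphism of Nisnevich sheaves
\[
\ul\pi_{m+n\alpha}\EM A \;\cong\; \ul H^{-(m+n),\,-n}(-,A).
\]

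Setting $(p,q):=(-(m+n),-n)$, the three hypotheses translate as follows.  When $m<0$, one has $p-q=-m>0$, so $p>q$.  When $m=0$ and $n>0$, one has $q=-n<0$.  When $m>0$ and $n>-1$ (so $n\ge 0$), either $q=-n<0$ (if $n>0$), or else $q=0$ and $p=-m\le -1<0$ (if $n=0$).  Each of these places $(p,q)$ in a standard vanishing region for motivic cohomology: the motivic complex $A(q)$ is zero for $q<0$, and for $q\ge 0$ it is quasi-isomorphic to a complex of Nisnevich sheaves concentrated in cohomological degrees $[0,q]$, whence $\ul H^{p,q}(-,A)=0$ whenever $q<0$, $p<0$, or $p>q$.

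Finite generation of $A$ is used only to reduce, via long exact sequences coming from short exact sequences of coefficients, to the cases $A=\Z$ and $A=\Z/\ell$, where the vanishing is the content of the cited Suslin--Voevodsky result.  The main step is really the identification of $\ul\pi_{m+n\alpha}\EM A$ with a motivic cohomology sheaf: once the grading conventions are pinned down, the rest is bookkeeping against the known vanishing region, and no substantial obstacle arises beyond that translation.
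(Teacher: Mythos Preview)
Your proposal is correct and follows essentially the same approach as the paper's proof: identify $(\ul\pi_{m+n\alpha}\EM A)(U)$ with the motivic cohomology group $H^{-m-n}(U;A(-n))$ and then invoke the standard vanishing region for motivic cohomology (which the paper cites as \cite[Corollary 3.2.1]{SV}). Your case analysis on $(p,q)=(-(m+n),-n)$ and the remark on reducing to $A=\ZZ$ or $\ZZ/\ell$ simply make explicit what the paper leaves to the citation.
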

\begin{proof}
We have
\[
\begin{aligned}
  (\ul\pi_{m+n\alpha} \EM A)(U) &= [S^{m+n\alpha}\wedge U_+,\EM A]\\
  &\cong [U_+,S^{-m-n\alpha}\wedge \EM A]\\
  &\cong H^{-m-n}(U;A(-n)).
\end{aligned}
\]
The stated vanishing range then follows from \cite[Corollary 3.2.1]{SV}.
\end{proof}

While discussing motivic Eilenberg-MacLane spectra, we take a moment to note the following lemma which we will need later in our arguments.

\begin{lemma}\label{lemma:tau}
Suppose $\chr F\ne 2$.  Then multiplication by $\tau$ is injective on $\ul\pi_\star\EM\FF_2$.
\end{lemma}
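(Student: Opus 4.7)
The plan is to invoke Voevodsky's proof of the mod-$2$ Bloch--Kato conjecture (equivalently the Beilinson--Lichtenbaum theorem in characteristic $\neq 2$) to identify mod-$2$ motivic cohomology with mod-$2$ étale cohomology in the range where $\ul\pi_\star\EM\FF_2$ can be nonzero, and to observe that after trivializing $\mu_2\cong\FF_2$, multiplication by $\tau$ becomes the identity.

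First I would discard the trivial cases. By \aref{lemma:EM}, $\ul\pi_{m+n\alpha}\EM\FF_2 = 0$ whenever $m < 0$, so it suffices to treat $m \geq 0$. Using the identification $(\ul\pi_{m+n\alpha}\EM\FF_2)(U) \cong H^{p}(U;\FF_2(q))$ from the proof of \aref{lemma:EM} with $(p,q)=(-m-n,-n)$, the condition $m \geq 0$ is exactly $p \leq q$, so both the source and the target of $\tau$-multiplication (which shifts $(p,q)$ to $(p,q+1)$) sit in the Beilinson--Lichtenbaum range.

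In this range, Voevodsky's theorem yields natural isomorphisms
\[
  H^{p}(U;\FF_2(q)) \xrightarrow{\cong} H^{p}_{\et}(U;\mu_2^{\otimes q})
\]
for every smooth $U$ over $F$. Because $-1 \in F$, the Galois action on $\mu_2$ is trivial and $\mu_2^{\otimes q} \cong \FF_2$ as étale sheaves on $\Sm/F$; under this trivialization the class $\tau \in H^{0,1}(F;\FF_2) = \mu_2(F)$ corresponds to $1 \in \FF_2$, so cup product with $\tau$ acts as the identity on $H^{p}_{\et}(U;\FF_2)$. Hence $\tau$-multiplication is a valuewise isomorphism of presheaves, and exactness of Nisnevich sheafification promotes this to an isomorphism---in particular an injection---of the homotopy sheaves $\ul\pi_\star \EM\FF_2$. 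The one subtlety is the multiplicativity of the motivic-to-étale comparison, which is needed to identify motivic $\tau$-multiplication with étale $\tau$-multiplication; this is standard, so I do not anticipate further obstacles.
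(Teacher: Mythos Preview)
Your argument is correct, but it differs from the paper's.  The paper first invokes cellularity of $\EM\FF_2$ (Hoyois) to reduce the sheaf-level statement to the corresponding statement on homotopy \emph{groups}, i.e.\ on sections over $\Spec F$; it then quotes the Milnor conjecture in the packaged form $\pi_\star\EM\FF_2(\Spec F)\cong K^M_*(F)/2[\tau]$, where injectivity of multiplication by $\tau$ is visible from the polynomial structure.  You instead stay at the sheaf level and use Beilinson--Lichtenbaum for every smooth $U$, together with the trivialization $\mu_2\cong\FF_2$ in characteristic $\ne 2$, to see that $\tau$ acts as the identity on the \'etale side in the range $p\le q$.  Both routes rest on Voevodsky's resolution of the Milnor/Bloch--Kato conjectures, just packaged differently: the paper uses the field-level consequence plus a cellularity reduction, while you use the variety-level consequence directly.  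Your approach has the minor bonus of showing that $\tau$ is in fact an \emph{isomorphism} on $\ul\pi_{m+n\alpha}\EM\FF_2$ for $m\ge 0$, not merely injective; the paper's argument is a line shorter once the polynomial description is in hand and sidesteps the need to cite multiplicativity of the motivic-to-\'etale comparison.
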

\begin{proof}
The spectrum $\EM\FF_2$ is cellular by \cite[Proposition 8.1]{Hoyois}.
Thus we can check injectivity of  $\tau:\ul\pi_\star\EM\FF_2\to \ul\pi_{\star+1-\alpha}\EM\FF_2$ on homotopy \emph{groups}, \emph{i.e.,} after evaluating the sheaves at $\Spec F$.  
By the solution of the Milnor conjecture, $\ul\pi_\star\EM\FF_2(\Spec F)\cong K^M_*(F)/2[\tau]$, so injectivity of $\tau$ is obvious.
\end{proof}

We now turn to the structure of $E_2^{s,t}(\MU)$.  
This has been an object of intense study since the 1970s, and the results we need are easily culled from the literature.  
We first consider various finiteness properties, and how to build up $E_2^{s,t}(\MU)$ from $p$-local information.

\begin{lemma}\label{lemma:BPMU}
\begin{enumerate}[(a)]
\item Unless $(s,t) = (0,0)$, the group $E_2^{s,t}(\MU)$ is finite; furthermore, $E_2^{0,0}(MU) = \ZZ$.
\item Let $\mathscr{P}$ denote the set of rational primes and for $p\in \mathscr{P}$ let $\BP(p)$ denote the $p$-local Brown-Peterson spectrum.  Let $E_2^{*,*}(\BP(p))$ denote the cohomology of the $\BP(p)$ Hopf algebroid.  Then
\[
  E_2^{>0,*}(\MU) \cong \bigoplus_{p\in\mathscr{P}}E_2^{>0,*}(\BP(p)).
\]
\item There is a vanishing line so that $E_2^{s,t}(\BP(p)) = 0$ when $t<2s(p-1)$.
\end{enumerate}
\end{lemma}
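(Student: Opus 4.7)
The plan is to reduce each part to well-documented results on the Novikov $E_2$-page from chromatic stable homotopy theory. For (a), the identification $E_2^{0,0}(\MU)=\ZZ$ is immediate, since $\Hom_{\MU_*\MU}(\MU_*,\MU_*)$ in internal degree $0$ is spanned by the identity comodule map on $\MU_*$. For finiteness in other bidegrees I would combine two standard observations. First, $\MU_* \cong \ZZ[x_1,x_2,\ldots]$ with $|x_i|=2i$, and $\MU_*\MU$ is free as a graded $\MU_*$-module with generators concentrated in positive even degrees, so every term of the reduced cobar complex is a finitely generated abelian group in each fixed internal degree; hence $E_2^{s,t}(\MU)$ is finitely generated. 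Second, one rationalizes: every formal group law over a $\QQ$-algebra is strictly isomorphic to the additive one, so the Hopf algebroid $(\MU_* \otimes \QQ,\, \MU_*\MU \otimes \QQ)$ has Ext cohomology concentrated in bidegree $(0,0)$. This forces $E_2^{s,t}(\MU)$ to be torsion whenever $(s,t)\neq(0,0)$, and finitely generated plus torsion yields finite.

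For (b), I would invoke Quillen's splitting $\MU_{(p)}\simeq\bigvee \Sigma^{?}\BP(p)$ together with the classical change-of-rings isomorphism $E_2^{*,*}(\MU)\otimes\ZZ_{(p)}\cong E_2^{*,*}(\BP(p))$ due to Novikov (see Ravenel's Green Book, Chapter 4). Combined with the finiteness established in (a), every $E_2^{s,t}(\MU)$ with $s>0$ is a finite, hence torsion, abelian group, and therefore canonically decomposes as the direct sum of its $p$-primary components indexed by $p\in\mathscr{P}$; change of rings identifies the $p$-primary summand with $E_2^{s,t}(\BP(p))$.

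For (c), recall $\BP_*\BP\cong\BP_*[t_1,t_2,\ldots]$ with $|t_i|=2(p^i-1)\geq 2(p-1)$ for all $i\geq 1$. Consequently the reduced coideal $\overline{\BP_*\BP}$ lives in internal degrees $\geq 2(p-1)$, and the $s$-fold tensor power $\overline{\BP_*\BP}^{\otimes_{\BP_*} s}$ in degrees $\geq 2s(p-1)$. Since $\BP_*$ is concentrated in nonnegative degrees, the $s$-th term of the reduced cobar complex computing $\Ext^{s,t}_{\BP_*\BP}(\BP_*,\BP_*)$ vanishes in internal degrees $t<2s(p-1)$, which yields the claimed vanishing line. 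The deepest input among these arguments is the change-of-rings theorem underlying (b); however, this is a workhorse of chromatic stable homotopy theory, and is best dispatched by citation rather than re-proof, so I do not expect any real obstacle beyond locating the correct references.
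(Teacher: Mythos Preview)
Your proposal is correct; each of the three sketches is the standard argument and goes through as written. The paper's own proof is purely by citation (to Novikov for (a) and (c), and to Zahler for (b)), so you have in fact supplied more detail than the authors do, while following the same underlying route.
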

\begin{proof}
These are all standard results going back to Novikov and Zahler.  For (a), see \cite[Proposition 2.1]{Novikov}.  For (b), see \cite[p.482]{Zahler}.  For (c), see \cite[Corollary 3.1]{Novikov}.
\end{proof}
\begin{remark}
Readers expert with the Adams-Novikov spectral sequence will notice that we did not include the sparsity theorem $E_2^{s,t}(\BP(p)) = 0$ whenever $2p-2\nmid t$.  Combined with \aref{thm:slice}, sparsity certainly gives interesting information about the suspension bigrading of $p$-local slice summands.  But because of the fourth quadrant cone worth of nonzero homotopy sheaves associated with an Eilenberg-MacLane spectrum (\aref{lemma:EM}), we do not get analogous sparsity results on $\ul\pi_\star \hat \sphere_{(p)}$, at least when $\cd F = \infty$.  If $\cd F$ is finite and sufficiently small relative to $p$, then one can deduce a sort of sparsity result for $\ul\pi_\star \sphere$, but we do not pursue the specifics here.
\end{remark}

Finally, we will need to leverage the Andrews-Miller analysis of the $\alpha_1$-inverted $2$-local Adams-Novikov spectral sequence \cite{AM}.  Recall that $\alpha_1$ is the generator of $E_2^{1,2}(\BP(2))\cong E_2^{1,2}(\MU)$.

\begin{lemma}\label{lemma:AM}
There is an isomorphism
\[
  \alpha_1^{-1}E_2^{*,*}(\MU)\cong \FF_2[\alpha_1^{\pm 1},\alpha_3,\alpha_4]/(\alpha_4^2)
\]
where $|\alpha_3| = (1,6)$ and $|\alpha_4| = (1,8)$.  Moreover, the localization map \[E_2^{s,t}(\MU)\to \alpha_1^{-1}E_2^{s,t}(\MU)\] is an isomorphism whenever $t<6s-10$ and $t<4s$.
\end{lemma}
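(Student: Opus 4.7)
The plan is to decompose $E_2^{*,*}(\MU)$ by primes using \aref{lemma:BPMU}, isolate the $2$-local contribution since $\alpha_1$ is $2$-torsion, quote the Andrews--Miller computation for the $\alpha_1$-periodization, and read off the isomorphism ranges from the Novikov vanishing line.

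For the first assertion, the class $\alpha_1 \in E_2^{1,2}(\BP(2))$ is $2$-torsion, so multiplication by $\alpha_1$ annihilates every odd-primary summand in the splitting of \aref{lemma:BPMU}(b) (those summands are finite and $p$-primary for odd $p$, hence killed by any $2$-torsion element). Consequently
\[
\alpha_1^{-1} E_2^{>0,*}(\MU) \;=\; \alpha_1^{-1} E_2^{>0,*}(\BP(2)),
\]
and the right-hand side is identified with $\FF_2[\alpha_1^{\pm 1}, \alpha_3, \alpha_4]/(\alpha_4^2)$ by the main theorem of Andrews--Miller \cite{AM}. The $(s,t)=(0,0)$ summand $\ZZ$ from \aref{lemma:BPMU}(a) reduces modulo $2$ once $\alpha_1$ is inverted (because $2\alpha_1 = 0$), contributing the constant summand of the polynomial ring.

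For the isomorphism range, factor the localization map as
\[
E_2^{s,t}(\MU) \;\longrightarrow\; E_2^{s,t}(\BP(2)) \;\longrightarrow\; \alpha_1^{-1} E_2^{s,t}(\BP(2)),
\]
using \aref{lemma:BPMU}(b). The first projection is an isomorphism once every odd-primary piece vanishes. By \aref{lemma:BPMU}(c) applied at the smallest odd prime $p=3$, the group $E_2^{s,t}(\BP(3))$ vanishes whenever $t<4s$, and larger primes vanish on strictly wider regions; this yields the bound $t<4s$. For the second map, the task is to show that every class in $E_2^{s,t}(\BP(2))$ already lifts to an $\alpha_1$-periodic class of the form $\alpha_1^a \alpha_3^b \alpha_4^c$ with $a\ge 0$. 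The range $t<6s-10$ is extracted from Andrews--Miller's analysis of $E_2^{*,*}(\BP(2))$ near the vanishing line: one locates the lowest-lying $\alpha_1$-torsion classes (beginning with the $\beta$-family and its products) and verifies that they all sit on or above the line $t = 6s-10$.

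The main technical obstacle is this final bookkeeping step. The bound $t<4s$ falls out directly from the Novikov vanishing line in \aref{lemma:BSMU}(c), but the constant $6s-10$ requires tracking precisely where $\alpha_1$-torsion first appears in the $2$-primary Adams--Novikov $E_2$-page, which is the quantitative heart of the computation we are citing from \cite{AM}.
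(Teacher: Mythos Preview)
Your proposal is correct and follows essentially the same approach as the paper: reduce to $\BP(2)$ via \aref{lemma:BPMU}(b) using $2\alpha_1=0$, cite Andrews--Miller for the $\alpha_1$-periodic computation, and combine the odd-primary vanishing line from \aref{lemma:BPMU}(c) with the $2$-primary isomorphism range from \cite{AM}. The paper simply cites \cite[Proposition 5.1]{AM} directly for the bound $t<6s-10$, whereas you sketch its content; also note your typo \texttt{lemma:BSMU} should be \texttt{lemma:BPMU}.
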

\begin{proof}
In \cite[Corollary 6.2.3]{AM}, Andrews and Miller prove that
\[
  \alpha_1^{-1}E_2^{*,*}(\BP(2))\cong \FF_2[\alpha_1^{\pm 1},\alpha_3,\alpha_4]/(\alpha_4^2).
\]
Since $2\alpha_1=0$, the $E_2^{*,*}(\MU)$ version of this isomorphism follows from \aref{lemma:BPMU}(b).

For the second part of the lemma, note that by \cite[Proposition 5.1]{AM}, \[E_2^{s,t}(\BP(2))\to \alpha_1^{-1}E_2^{s,t}(\BP(2))\] is an isomorphism when $t<6s-10$.  For $p>2$, \aref{lemma:BPMU}(c) implies that $E_2^{s,t}(\BP(p)) = 0$ for $t<2s(p-1)\le 4s$.  Hence when $t<\min\{6s-10,4s\}$, we are guaranteed to only have $2$-primary groups in the Andrews-Miller isomorphism range.
\end{proof}

\begin{figure}
\begin{center}
\begin{tikzpicture}[x=.6cm,y=.6cm]
\draw[step=1.0,gray,ultra thin] (-.9,-.9) grid (19.9,7.9);
\draw[thick] (0,-.9) -- (0,7.9) node[anchor = south east] {$s$};
\draw[thick] (-.9,0) -- (19.9,0) node[anchor = north west] {$t-s$};

\draw[thick,red] (0,0) -- (7.9,7.9) node[anchor = south west] {$p=2$};
\draw[thick,green] (0,0) -- (19.9,19.9/3) node[anchor = north west] {$p=3$};
\draw[thick,blue] (0,0) -- (19.9,19.9/5) node[anchor = west] {$p=5$};
\draw[thick,purple] (0,0) -- (19.9,19.9/7) node[anchor = west] {$p=7$};
\draw[thick,magenta] (0,0) -- (19.9,19.9/11) node[anchor = west] {$p=11$};

\draw[thick,cyan] (0,2) -- (10,4) -- (15,5.05);
\draw[thick,cyan] (15,5.05) -- (19.9,19.9/3+.05) node[anchor = south west] {A-M};

\end{tikzpicture}
\caption{This diagram represents structural features of $E_2^{s,t}(\MU)$ and $E_2^{s,t}(\BP(p))$ for various primes $p$.  Note that we have drawn the $E_2$-page in the tradition Adams grading with $t-s$ on the horizontal axis and $s$ on the vertical axis.  The lines labelled by a prime $p$ correspond to $p$-local vanishing lines, so that $E_2^{s,t}(\BP(p)) = 0$ above these lines.  The piecewise linear curve corresponds to the Andrews-Miller range of \aref{lemma:AM}.}\label{fig:Novikov}
\end{center}
\end{figure}
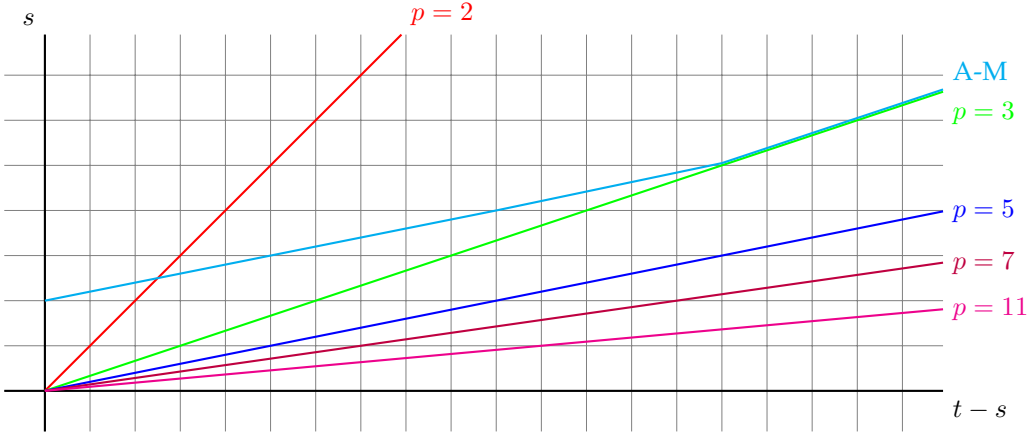

\section{Vanishing for the $\eta$-complete sphere}\label{sec:eta}

This section consists of the proofs of \aref{thm:main} and \aref{thm:plocal}.

\begin{proof}[Proof of \aref{thm:main}]
Vanishing for $m<0$ follows from the vanishing range in the $E_1$-page of the slice spectral sequence, 
which in turn relies on Morel's connectivity theorem \cite{morel:connectivity}.  
We turn to the second condition, namely vanishing of $\ul\pi_{m+n\alpha}\hat \sphere$ when $m>0$, $m\equiv 1\text{ or }2\pmod 4$, and $2n>\max\{3m+5,4m\}$.  Recall that the slice spectral sequence
\[
  E_1^{m,n,t} = \ul\pi_{m+n\alpha}s_t \sphere\implies \ul\pi_{m+n\alpha} \hat \sphere
\]
converges to the homotopy sheaves of $\hat \sphere$.  By \aref{thm:slice}, we may rewrite the $E_1$-page as
\[
  E_1^{m,n,t} = \bigoplus_{s\ge 0} \ul\pi_{m+n\alpha}\Sigma^{t-s+t\alpha}\EM E_2^{s,2t}(\MU).
\]

Let $T$ denote the linear transformation of the $(s,t)$-plane to the $(m+n\alpha)$-plane given by the matrix $\begin{pmatrix}-1&1/2\\ 0&1/2\end{pmatrix}$ (where both planes are given their standard bases).  We call $T$ the \emph{Novikov-to-slice grading shift} since Novikov $E_2$-terms in degree $(s,t)$ correspond to slice summands which are Eilenberg-MacLane spectra shifted by $T(s,t)$.  We say that a bigrading $m+n\alpha$ \emph{contains a slice summand} if there exist integers $s$, $t$ such that $t-s+t\alpha = m+n\alpha$ and $E_2^{s,2t}(\MU)\ne 0$.  By grade school algebra and \aref{thm:slice}, $m+n\alpha$ contains a slice summand if and only if $E_2^{n-m,2n}(\MU)\ne 0$.  We see then that under the Novikov-to-slice grading shift $T$, vanishing regions in $E_2^{s,t}(\MU)$ are mapped to bigradings which do not contain a slice summand.

Similarly, other structural properties are preserved by $T$ as long as statements are translated into the language of slice summands (rather than groups or sheaves); see \aref{fig:Novikov} and \aref{fig:slice}.  For instance, let $\boldsymbol{\alpha}$ denote the region in the $(s,t)$-plane specified in \aref{lemma:AM} in which the map $E_2^{s,t}(\MU)\to \alpha_1^{-1}E_2^{s,t}(\MU)$ is an isomorphism; \emph{i.e.,}
\[
  \boldsymbol{\alpha} = \{(s,t)\mid t<\min\{6s-10,4s\}\}.
\]
Then within the region
\[
  T(\boldsymbol{\alpha}) = \{m+n\alpha\mid 2n>\max\{3m+5,4m\}\},
\]
we know that every nontrivial slice summand is a suspension of $\EM \FF_2$ indexed by a monomial of the form $\alpha_1^i\alpha_3^j\alpha_4^\varepsilon$ where $i$ and $j$ are sufficiently large integers and $\varepsilon = 0$ or $1$.  The suspension bigrading for an $\EM \FF_2$ indexed by $\alpha_1^i\alpha_3^j\alpha_4^\varepsilon$ is $(2j+3\varepsilon) + (i+3j+4\varepsilon)\alpha$.

\begin{figure}
\begin{center}
\begin{tikzpicture}[x=.8cm,y=.4cm]
\draw[step=1.0,gray,ultra thin] (-.9,-.9) grid (7.9,15.9);
\draw[thick] (0,-.9) -- (0,15.9) node[anchor = south east] {$n\alpha$};
\draw[thick] (-.9,0) -- (7.9,0) node[anchor = north west] {$m$};

\draw[thick,red] (-.05,.0) -- (-.05,15.9) node[anchor = north east] {$p=2$};
\draw[thick,green] (0,0) -- (7.9, 2*7.9) node[anchor = north west] {$p=3$};
\draw[thick,blue] (0,0) -- (7.9,7.9*4/3) node[anchor = west] {$p=5$};
\draw[thick,purple] (0,0) -- (7.9,7.9*6/5) node[anchor = west] {$p=7$};
\draw[thick,magenta] (0,0) -- (7.9,7.9*10/9) node[anchor = west] {$p=11$};
\draw[thick,darkgray] (0,0) -- (7.9,7.9) node[anchor = west] {$p\to \infty$};

\draw[thick,cyan] (0,1.5) -- (5,10.05) -- (7.9,7.9*2+.05) node[anchor = south west] {A-M};

\end{tikzpicture}
\end{center}
\caption{This diagram represents structural features of the $E_1$- and $E_2$-pages of the slice spectral sequence for the motivic sphere spectrum.  The $m$- and $n\alpha$-axes are presented, with the slice grading $t$ suppressed.  After the Novikov-to-slice grading shift, the $p$-local vanishing lines of \aref{fig:Novikov} become the vanishing lines of \aref{thm:plocal}, and the Andrews-Miller vanishing curve gives the vanishing curve of \aref{thm:main} (for $m\equiv 1\text{ or }2\pmod{4}$).  Note that the vanishing range given by \aref{thm:main} is much better than the naive range given by the $p=2$ case of \aref{thm:plocal}.}\label{fig:slice}
\end{figure}
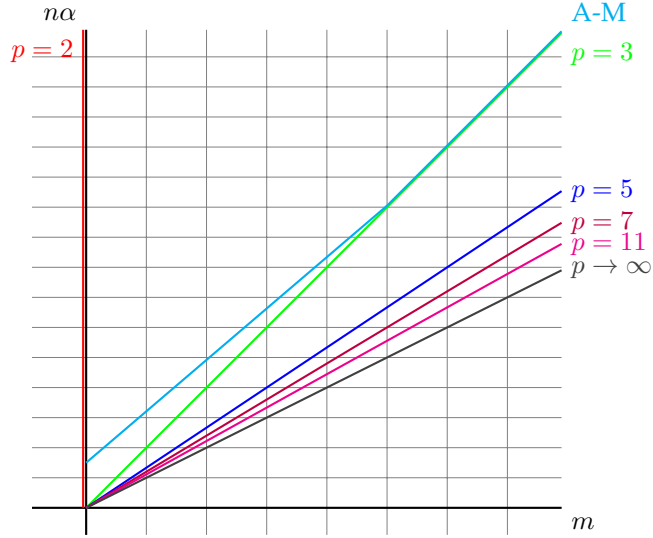

We now show that in the slice spectral sequence we have
\[
  E_2^{m,n,t} = 0
\]
when $m+n\alpha\in T(\boldsymbol\alpha)$ and $m\equiv 1$ or $2\pmod{4}$.  By \cite[Lemma 4.2]{RSO:pi1}, there are $d_1$ differentials in the slice spectral sequence which restrict to
\[
  \tau \mathrm{pr}:\Sigma^{4q+1+(4q+2)\alpha}\EM\ZZ/a_{2q}\ZZ\to \Sigma^{4q+(4q+3)\alpha}\EM\FF_2
\]
on $\overline{\alpha}_{4q+2}$,
\[
  \tau:\Sigma^{4q+1+(4q+2+j)\alpha}\EM\FF_2\to \Sigma^{4q+(4q+3+j)\alpha}\EM\FF_2
\]
on $\alpha_1^j\alpha_{4q+2}$ for $j\ge 1$, and
\[
 \tau:\Sigma^{4q-2+(4q-1+j)\alpha}\EM\FF_2\to \Sigma^{4q-3+(4q+j)\alpha}\EM\FF_2
\]
on $\alpha_1^j\alpha_{4q-1}$ for $j\ge 0$.  Within $T(\boldsymbol\alpha)$, these differentials are multiplication by $\tau$ linking suspended $\EM\FF_2$ summands indexed by $\alpha_1^i\alpha_3^{2j+1}\alpha_4^\varepsilon$ to suspended $\EM\FF_2$ summands indexed by $\alpha_1^{4+i}\alpha_3^{2j}\alpha_4^\varepsilon$.  By \aref{lemma:tau}, multiplication by $\tau$ is injective on $\ul\pi_\star \EM\FF_2$.  It follows that $E_2^{T(\boldsymbol\alpha),*}$ is concentrated in columns indexed by $m\equiv 0\text{ or }3\pmod{4}$.

This gives our desired vanishing result on the slice $E_2$-page, which in turn implies that $\ul\pi_{m+n\alpha}\hat\sphere = 0$ when $m+n\alpha\in T(\boldsymbol\alpha)$ and $m\equiv 1\text{ or }2\pmod{4}$, concluding our proof.
\end{proof}

\begin{proof}[Proof of \aref{thm:plocal}]
Consider the $p$-local slice spectral sequence
\[
  E_1^{m,n,t}(p) = \ul\pi_{m+n\alpha}s_t\sphere_{(p)}\implies \ul\pi_{m+n\alpha}\hat\sphere_{(p)}.
\]
A $p$-local version of \aref{thm:slice} implies that
\[
  s_t \sphere_{(p)} = \bigvee_{s\ge 0}\Sigma^{t-s+t\alpha}\EM E_2^{s,2t}(\BP(p)).
\]
By \aref{lemma:BPMU}(c), $E_2^{s,t}(\BP(p))= 0$ for $t<2s(p-1)$.  Under the Novikov-to-slice  grading shift $T$ (see the proof of \aref{thm:main}), this region becomes
\[
  \{m+n\alpha\mid (p-2)n>(p-1)m\}.
\]
As such,
\[
  s_n \sphere_{(p)} \simeq *
\]
for $(p-2)n>(p-1)m$.  By \aref{lemma:EM}, a nontrivial slice summand $\Sigma^{t-s+t\alpha}\EM E_2^{s,2t}(\BP(p))$ can only contribute to $E_1^{m,n,t}(p)$ when $m-t+s\ge 0$ and $n-t\le 0$.  (\aref{lemma:EM} actually provides a more stringent vanishing condition, but this ``non-fourth quadrant'' vanishing is all we need here.)  It follows that $E_1^{m,n,t}(p) = 0$ for $(p-2)n>(p-1)m$.  We conclude that $\ul\pi_{m+n\alpha}\hat\sphere_{(p)} = 0$ in this range as well.
\end{proof}

\section{Vanishing for the integral and $p$-local spheres}\label{sec:uncomp}

In this section, we study the problem of lifting vanishing results about $\ul\pi_\star\hat\sphere$ to $\ul\pi_\star\sphere$.  We first recall some base change theorems and use them to prove \aref{thm:integral0}.  We then recall Bachmann's theorem on $\SHA(F)[1/2,1/\eta]$, and finally prove \aref{thm:integral1}.

\subsection{Base change}\label{subsec:base}
Recall that for any map of schemes $f:S\to T$ one has a pullback, \emph{i.e.,}~base change, functor $f^*:\SHA(T)\to \SHA(S)$.  In this subsection, we use standard arguments with base change functors to expand the class of fields for which various vanishing results will hold.  We write $\sphere_F$ for the sphere spectrum in $\SHA(F)$ or $\SHA(F)[1/q]$.  Note that if $f:\Spec E\to \Spec F$ is an extension of fields, then $f^*\sphere_F = \sphere_E$.

The functor $f^*$ always admits a right adjoint $f_*$.  If $f$ is smooth, it also admits a left adjoint $f_\sharp$ (given by composition of the structure map to $S$ with $f$).  (See \cite[Appendix A]{Hoyois} for a brief review.)

Below we will use the fact that when $F$ is perfect, $\ul\pi_{m+n\alpha}X$ is a strictly $\AA^1$-invariant sheaf in the sense of \cite{morel:A1}.  (See \cite[\S1.2]{Hoyois} for a brief review.)

\begin{lemma}\label{lemma:ess}
Let $F$ be a perfect field and suppose $E/F$ is an essentially smooth field extension.  
Then there is an isomorphism $\ul\pi_\star\sphere_F(\Spec E)\cong \ul\pi_\star\sphere_E(\Spec E)$.
\end{lemma}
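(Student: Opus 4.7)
The plan is to realize both sides as a common cofiltered colimit over a smooth approximation of $\Spec E$. Since $E/F$ is essentially smooth, we may write $\Spec E = \lim_\alpha U_\alpha$ as a cofiltered limit of smooth affine $F$-schemes $U_\alpha$ with affine transition maps.

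For the left-hand side, I would invoke Morel's theorem: because $F$ is perfect, $\ul\pi_{m+n\alpha}\sphere_F$ is strictly $\AA^1$-invariant, hence unramified, and so extends canonically along essentially smooth morphisms via the formula
\[
\ul\pi_\star \sphere_F(\Spec E) \;\cong\; \colim_\alpha \ul\pi_\star \sphere_F(U_\alpha).
\]
On each smooth $U_\alpha$, Nisnevich descent for motivic spectra shows that the presheaf $U\mapsto [\Sigma^\star U_+,\sphere_F]_{\SHA(F)}$ is already a Nisnevich sheaf and therefore agrees with the sheaf section $\ul\pi_\star\sphere_F(U_\alpha)$.

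For the right-hand side, I would appeal to continuity of $\SHA$ along cofiltered limits of smooth schemes with affine bonding maps, as in Hoyois's treatment of the six-functor formalism. This gives
\[
[\Sigma^\star \Spec(E)_+, \sphere_E]_{\SHA(E)} \;\cong\; \colim_\alpha [\Sigma^\star U_{\alpha,+}, \sphere_{U_\alpha}]_{\SHA(U_\alpha)}.
\]
The smooth pullback adjunction $f_{\alpha\sharp}\dashv f_\alpha^*$ for the structure map $f_\alpha\colon U_\alpha\to \Spec F$, together with the identities $f_{\alpha\sharp}\sphere_{U_\alpha} = U_{\alpha,+}$ and $f_\alpha^*\sphere_F = \sphere_{U_\alpha}$, rewrites each term of this colimit as $[\Sigma^\star U_{\alpha,+},\sphere_F]_{\SHA(F)}$. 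Since $\Spec E$ is a Nisnevich point, the sheaf-theoretic value $\ul\pi_\star\sphere_E(\Spec E)$ coincides with the underlying hom-set, matching the previous colimit expression.

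The hard part is bookkeeping: aligning Morel's continuity for strictly $\AA^1$-invariant sheaves with Hoyois's continuity for $\SHA$ so that the two cofiltered systems literally coincide, and keeping track of Nisnevich sheafification on schemes that are smooth but not themselves points. Once this is arranged, the claimed isomorphism follows from the chain of identifications above.
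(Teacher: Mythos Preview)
Your approach is essentially the same as the paper's: both write $\Spec E$ as a cofiltered limit of smooth $F$-schemes, then use the adjunction $f_\sharp \dashv f^*$ together with continuity of $\SHA$ (Hoyois's Lemma~A.7(1)) to identify both sides with a common filtered colimit of hom-sets in $\SHA(F)$. The paper runs the chain in the order ``adjunction, then continuity'' while you do ``continuity, then adjunction,'' but the ingredients and the shape of the argument are identical.

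One correction: your claim that Nisnevich descent for motivic spectra makes the presheaf $U\mapsto [\Sigma^\star U_+,\sphere_F]_{\SHA(F)}$ into a Nisnevich sheaf is not true as stated. Descent gives that the mapping \emph{spectrum} is a Nisnevich sheaf of spectra, but taking $\pi_0$ need not commute with sheafification, so the presheaf of hom-sets can genuinely differ from $\ul\pi_\star\sphere_F$ on an individual smooth $U_\alpha$. What saves the argument is exactly the observation you already make for the right-hand side: since $\Spec E$ is a point of the Nisnevich topos, the colimit you are computing is a stalk, and sheafification does not change stalks. So replace the descent justification with the stalk argument and the proof goes through; the paper glosses over this same point by writing ``by definition'' for the first equality.
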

\begin{proof}
Write $\Spec E$ as a cofiltered limit $\lim_\beta X_\beta$ of smooth $F$-schemes $X_\beta$.  Fix $m,n\in \ZZ$.  We have
\[\begin{aligned}
  \ul\pi_{m+n\alpha}\sphere_F(\Spec E)
  &=\colim_\beta\ul\pi_{m+n\alpha}\sphere_F(X_\beta) && \text{(by definition)}\\
  &= \colim_\beta [\Sigma^{m+n\alpha}X_{\beta+},\sphere_F]\\
  &= \colim_\beta [f_{\beta\sharp} f_\beta^*\Sigma^{m+n\alpha}\sphere_F,\sphere_F]\\
  &\cong \colim_\beta [f_\beta^*\Sigma^{m+n\alpha}\sphere_F,f_\beta^*\sphere_F]\\
  &\cong [\Sigma^{m+n\alpha}\sphere_E,f^*\sphere_F] && \text{(by \cite[Lemma A.7(1)]{Hoyois})}\\
  &= \ul\pi_{m+n\alpha}\sphere_E(\Spec E),
\end{aligned}
\]
as desired.
\end{proof}

In the following proposition, we write $\pi_{m+n\alpha}X := \ul\pi_{m+n\alpha}X(\Spec F)$ for the $(m+n\alpha)$-th homotopy group (as opposed to sheaf) of $X\in \SHA(F)$.

\begin{prop}\label{prop:colim}
Suppose $F$ is a filtered colimit of fields $F = \colim_\beta F_\beta$.  If $\pi_{m+n\alpha}\sphere_{F_\beta} = 0$ for all $\beta$, then $\pi_{m+n\alpha}\sphere_F = 0$.
\end{prop}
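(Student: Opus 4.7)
The plan is to reduce the assertion to the continuity of $\SHA$ under cofiltered limits of (affine) schemes. Writing $\Spec F = \lim_\beta \Spec F_\beta$ as a cofiltered limit with affine transition maps, each structure map $\phi_\beta\colon \Spec F \to \Spec F_\beta$ induces a base change functor $\phi_\beta^*\colon \SHA(F_\beta) \to \SHA(F)$ satisfying $\phi_\beta^*\sphere_{F_\beta}=\sphere_F$, and similarly for the transition maps $\phi_{\beta'\beta}\colon \Spec F_{\beta'}\to\Spec F_\beta$ for $\beta'\ge\beta$.

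The main tool is the continuity theorem for the motivic stable homotopy category (see, e.g., \cite[Proposition C.12 (or Lemma A.7)]{Hoyois}): for any object $X\in\SHA(F_{\beta_0})$ that is compact, and any $Y\in\SHA(F_{\beta_0})$, one has a natural isomorphism
\[
[\phi_{\beta_0}^* X,\phi_{\beta_0}^* Y]_{\SHA(F)}\;\cong\;\colim_{\beta\ge\beta_0}\,[\phi_{\beta\beta_0}^* X,\phi_{\beta\beta_0}^* Y]_{\SHA(F_\beta)}.
\]
Applying this with $X=Y=\sphere_{F_{\beta_0}}$ (and noting that shifts of the sphere spectrum are compact) produces
\[
\pi_{m+n\alpha}\sphere_F \;=\; [\Sigma^{m+n\alpha}\sphere_F,\sphere_F]_{\SHA(F)} \;\cong\; \colim_{\beta}\, \pi_{m+n\alpha}\sphere_{F_\beta}.
\]
Under the hypothesis that each $\pi_{m+n\alpha}\sphere_{F_\beta}=0$, the colimit on the right vanishes, giving the conclusion.

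I expect the main obstacle to be citing the continuity statement in the precise form needed. The essentially smooth variant already used in \autoref{lemma:ess} handled a single essentially smooth extension by combining smooth base change with the compactness adjunction $f_\sharp\dashv f^*$; here the transition maps $F_\beta \to F_{\beta'}$ need not be smooth or separable (for example, $F$ could be a perfection), so we cannot route the argument through $f_\sharp$. Instead, one invokes the fact that $\SHA(-)$ commutes with cofiltered limits of qcqs schemes with affine transition maps at the level of presentable $\infty$-categories, and extracts the above colimit formula on morphism sets between compact objects. Once this is in hand, the proof is a one-line consequence.
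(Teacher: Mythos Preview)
Your argument is correct and coincides with the paper's own proof, which simply cites \cite[Lemma A.7(1)]{Hoyois} for the continuity statement and concludes immediately. Your additional discussion of why $f_\sharp$ is unavailable here is accurate but unnecessary, since the cited lemma already handles arbitrary cofiltered limits of qcqs schemes with affine transition maps.
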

\begin{proof}
This follows from \cite[Lemma A.7(1)]{Hoyois}.
\end{proof}

\begin{lemma}\label{lemma:cd}
Fix $m,n\in \ZZ$ and suppose $\ul\pi_{m+n\alpha}\sphere_k = 0$ for all nonreal characteristic $0$ fields $k$ with $\cd k<\infty$.  Then $\ul\pi_{m+n\alpha}\sphere_F = 0$ for any nonreal characteristic $0$ field $F$, regardless of cohomological dimension.
\end{lemma}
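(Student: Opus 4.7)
The plan is to reduce sheaf-level vanishing over $F$ to vanishing of the homotopy \emph{groups} $\pi_{m+n\alpha}\sphere_{L_\beta}$ for finitely generated nonreal extensions $L_\beta/\QQ$, where the hypothesis of the lemma directly applies (since such $L_\beta$ have finite cohomological dimension).

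The first step is a reduction to function fields. Since $F$ has characteristic zero it is perfect, so $\ul\pi_{m+n\alpha}\sphere_F$ is strictly $\AA^1$-invariant and hence unramified in the sense of Morel \cite{morel:A1}: for every $U\in\Sm/F$, the restriction map from $\ul\pi_{m+n\alpha}\sphere_F(U)$ to the product over generic points of $U$ is injective. Consequently, $\ul\pi_{m+n\alpha}\sphere_F=0$ provided $\ul\pi_{m+n\alpha}\sphere_F(\Spec L)=0$ for every finitely generated field extension $L/F$ (equivalently, every essentially smooth field extension). By \aref{lemma:ess}, this value is isomorphic to $\pi_{m+n\alpha}\sphere_L$, so it suffices to show $\pi_{m+n\alpha}\sphere_L=0$ for each such $L$.

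The second step handles the possibly infinite cohomological dimension of $L$. Since $F$ is nonreal, fix an expression $-1=a_1^2+\cdots+a_r^2$ with $a_i\in F\subseteq L$, and let $F_0:=\QQ(a_1,\ldots,a_r)\subseteq L$; then any field containing $F_0$ is automatically nonreal. Write $L$ as the filtered colimit of its finitely generated subfields $L_\beta\supseteq F_0$. Each $L_\beta$ is a finitely generated nonreal extension of $\QQ$, and a classical theorem (going back to Serre) implies that such a field has finite cohomological dimension, bounded in terms of its transcendence degree over $\QQ$. The hypothesis of the lemma therefore applies to yield $\ul\pi_{m+n\alpha}\sphere_{L_\beta}=0$, and in particular $\pi_{m+n\alpha}\sphere_{L_\beta}=0$. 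Invoking \aref{prop:colim} along $L=\colim_\beta L_\beta$ delivers $\pi_{m+n\alpha}\sphere_L=0$, which completes the argument.

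The main conceptual obstacle is the unramified reduction carried out in the first step: it is where Morel's structural theory of strictly $\AA^1$-invariant sheaves is indispensable, since without it one has no way to pass from sheaf vanishing to the values at function fields. The secondary subtle point is the finite cohomological dimension of nonreal finitely generated extensions of $\QQ$; this is standard but warrants a careful citation. Everything else is a formal chain of colimit and base-change identifications provided by \aref{lemma:ess} and \aref{prop:colim}.
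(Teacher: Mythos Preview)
Your proof is correct and follows essentially the same route as the paper's: reduce to values at finitely generated field extensions $L/F$ via strict $\AA^1$-invariance and \aref{lemma:ess}, then express $L$ as a filtered colimit of nonreal fields finitely generated over $\QQ$ (hence of finite cohomological dimension) and apply \aref{prop:colim}. The only cosmetic differences are that you take the elements $a_i$ witnessing $-1=\sum a_i^2$ from $F$ rather than from $L$, and you index the colimit by finitely generated subfields containing $F_0$ rather than by $L_0(A)$ for finite $A\subseteq L$; both choices yield the same conclusion.
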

\begin{proof}
Since $\ul\pi_{m+n\alpha}\sphere_F$ is strictly $\AA^1$-invariant, it suffices to check that $\ul\pi_{m+n\alpha}\sphere_F(\Spec L) = 0$ for all finitely generated field extensions $L/F$.  By \aref{lemma:ess}, this is the same as showing $\pi_{m+n\alpha}\sphere_L = 0$.  Thus, by \aref{prop:colim} and our hypothesis, it suffices to show that $L$ is a filtered colimit of nonreal fields with finite cohomological dimension.

Since $L$ is nonreal, there exist $a_1,\ldots,a_n\in L$ such that $-1 = a_1^2+\cdots+a_n^2$.  Let $L_0 = \QQ(a_1,\ldots,a_n)$.  Then $L_0$ is nonreal and $\cd L_0\le 2$ by \cite[\S II.4.4, Proposition 13]{serre:gc}.  Letting $A$ range over finite subsets of $L\smallsetminus L_0$ we see that $L = \colim_A L_0(A)$.  By \cite[\S II.4.1 Proposition 10' \& \S II.4.2 Proposition 11]{serre:gc}, we see that $\cd L_0(A) < \infty$, and it is clear that each $L_0(A)$ is nonreal, completing our proof.
\end{proof}

We can now use \aref{thm:cdfinite} to prove \aref{thm:integral0}.

\begin{proof}[Proof of \aref{thm:integral0}]
The positive characteristic statement is a direct consequence of \aref{thm:cdfinite}.  Suppose $F$ is nonreal with characteristic $0$.  If $\cd F<\infty$, then \aref{thm:cdfinite} implies $\sphere\simeq\hat\sphere$, and \aref{thm:main} and \aref{thm:plocal} guarantee that $\ul\pi_\star \sphere$ has the stated vanishing range.  By \aref{lemma:cd}, this is enough to conclude that the same vanishing range holds when $\cd F = \infty$.
\end{proof}

We conclude this section with a virtual cohomological dimension version of \aref{lemma:cd}.  We will use it in \aref{subsec:integral1} to prove \aref{thm:integral1}.

\begin{lemma}\label{lemma:vcd}
Fix $m,n\in\ZZ$ and suppose $\ul\pi_{m+n\alpha}\sphere_k = 0$ for all formally real fields $k$ with $\vcd_2 k <\infty$ and all nonreal characteristic $0$ fields.  Then $\ul\pi_{m+n\alpha}\sphere_F = 0$ for any formally real field $F$, regardless of virtual $2$-cohomological dimension.
\end{lemma}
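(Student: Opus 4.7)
The plan is to adapt the proof of \aref{lemma:cd} almost verbatim, the only new idea being that a finitely generated extension of the formally real field $F$ may itself be either formally real or nonreal, and the two cases will be covered by the two clauses of the hypothesis.

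Since $F$ is formally real it has characteristic $0$ and is therefore perfect, so $\ul\pi_{m+n\alpha}\sphere_F$ is strictly $\AA^1$-invariant; combining this with \aref{lemma:ess}, it suffices to show $\pi_{m+n\alpha}\sphere_L = 0$ for every finitely generated field extension $L/F$. Now I would split on $L$: if $L$ is nonreal, then $L$ is a nonreal characteristic $0$ field and the second clause of the hypothesis applies directly. If $L$ is formally real, write $L = \colim_A \QQ(A)$ with $A$ ranging over finite subsets of $L$; each $\QQ(A)$ is a subfield of $L$, hence formally real, and is finitely generated over $\QQ$. By \aref{prop:colim} it is then enough to show $\pi_{m+n\alpha}\sphere_{\QQ(A)} = 0$ for every such $A$, which will follow from the first clause of the hypothesis as soon as we know $\vcd_2 \QQ(A) < \infty$.

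The only step that requires any work is this finiteness of $\vcd_2 \QQ(A)$, and I would argue exactly as in \aref{lemma:cd}: since $\QQ(A)$ is formally real, $\QQ(A)(\sqrt{-1})$ is a finitely generated extension of the nonreal field $\QQ(\sqrt{-1})$, which has cohomological dimension at most $2$ by \cite[\S II.4.4, Proposition 13]{serre:gc}. Then \cite[\S II.4.1 Proposition 10' \& \S II.4.2 Proposition 11]{serre:gc} on cohomological dimension under finitely generated extensions gives $\cd \QQ(A)(\sqrt{-1}) < \infty$, so $\vcd_2 \QQ(A) < \infty$. I do not anticipate any real obstacle here: the lemma is essentially \aref{lemma:cd} with a two-case split on formal reality, and the substantive work was already carried out there.
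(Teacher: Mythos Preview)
Your proposal is correct and follows essentially the same route as the paper's own proof: reduce to homotopy groups over finitely generated extensions $L/F$, split on whether $L$ is nonreal or formally real, and in the latter case write $L$ as a filtered colimit of the fields $\QQ(A)$, each of which has finite $\vcd_2$ by the Serre propositions. Your justification of $\vcd_2 \QQ(A)<\infty$ via $\cd \QQ(\sqrt{-1})\le 2$ is slightly more explicit than the paper's (which simply cites $\vcd_2\QQ = 2$ and the same two Serre results), but the argument is the same.
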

\begin{proof}
Suppose $F$ is a formally real field and $L$ is a finitely generated extension of $F$.  As in the proof of \aref{lemma:cd}, it suffices to show $\pi_{m+n\alpha}\sphere_L = 0$.  If $L$ is nonreal, we are done by hypothesis, so we may assume $L$ is formally real.  We aim to express $L$ as a filtered colimit of formally real fields with finite $\vcd_2$.  Letting $A$ range over finite subsets of $L\smallsetminus \QQ$, we see that $L = \colim_A \QQ(A)$.  Each $\QQ(A)$ is formally real (since it is a subfield of $L$) and since $\vcd_2 \QQ = 2 <\infty$, the same two propositions from \cite{serre:gc} imply that $\vcd_2 \QQ(A)<\infty$ for each $A$.  By \aref{prop:colim}, we are done.
\end{proof}

\subsection{Bachmann's theorem}\label{subsec:bachmann}

Let $\rho$ denote the map $\sphere\to \Sigma^\alpha\sphere$ induced by taking the non-basepoint of $S^0$ to $-1\in \AA^1\setminus 0$.  In \cite{bachmann:rho}, Bachmann finds an alternate presentation of the $\rho$-inverted stable motivic homotopy category $\SHA(F)[1/\rho]$ in terms of the real \'etale topology.  We will not go into the details of the real \'etale topology, instead sending the reader to \cite{ret}, especially its first chapter.  Let $(\Spec F)_\ret$ denote the site of \'etale schemes over $\Spec F$ with the real \'etale topology, and let $\SH(\Shv((\Spec F)_\ret))$ denote the local stable homotopy category of sheaves of spectra on $(\Spec F)_\ret$ (see \cite[\S 2]{bachmann:rho}).  Let $X_F$ denote the Harrison space of orderings on $F$.  By \cite[Theorem 1.3]{ret}, $\Shv(X_F)\simeq \Shv((\Spec F)_\ret)$.  Bachmann's theorem (specialized to the case in which the base scheme is $\Spec$ of a field) then tells us the following.

\begin{theorem}[{\cite[Theorem 31]{bachmann:rho}}]\label{thm:bachmann}
There are triangulated equivalences of categories
\[
  \SHA(F)[1/\rho]\simeq \SH(\Shv((\Spec F)_\ret))\simeq \SH(\Shv(X_F)).
\]
\end{theorem}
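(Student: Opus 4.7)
The plan is to prove the two equivalences separately. The second equivalence $\SH(\Shv((\Spec F)_\ret))\simeq \SH(\Shv(X_F))$ follows immediately by transporting the equivalence of Grothendieck topoi $\Shv(X_F)\simeq \Shv((\Spec F)_\ret)$ from \cite[Theorem 1.3]{ret} to the associated local stable homotopy categories, since the local model structure on sheaves of spectra is determined by the underlying topos. The substantive content therefore lies in the first equivalence.

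For the first equivalence, I would construct a real \'etale realization functor $\RE_\ret\colon \SHA(F)\to \SH(\Shv((\Spec F)_\ret))$, show that $\rho$ is inverted by $\RE_\ret$, and prove that the induced functor on $\SHA(F)[1/\rho]$ is an equivalence. The functor itself arises from the morphism of sites from the Nisnevich to the real \'etale topology on $\Sm/F$: compose Yoneda with real \'etale sheafification and then $\AA^1$- and $\PP^1$-stabilize. Real \'etale $\AA^1$-invariance holds because stalks are taken at real closed fields $R$, where $\AA^1(R)=R$ is contractible as a real closed interval. To see that $\rho$ is inverted, note that stalkwise $\rho$ is the inclusion $\{\pm 1\}\hookrightarrow R^\times$, which is a homotopy equivalence via the sign map; hence $\RE_\ret$ descends to $\RE_\ret^\rho\colon \SHA(F)[1/\rho]\to \SH(\Shv((\Spec F)_\ret))$.

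To prove $\RE_\ret^\rho$ is an equivalence I would exploit that both sides are compactly generated triangulated categories: essential surjectivity would come from the fact that the images of the smooth-scheme generators of $\SHA(F)[1/\rho]$ already generate the target, while fully faithfulness would reduce to an isomorphism check on the bigraded homotopy sheaves of mapping spectra between generators. On the motivic side the $\rho$-inverted homotopy sheaves of the sphere are accessible via the slice filtration together with fracture squares splitting $\rho$-completion from $\rho$-periodic information; on the real \'etale side these homotopy sheaves compute cohomology of the Harrison space $X_F$ with appropriate coefficients (by the second equivalence already established). The main obstacle is precisely this comparison: after $\rho$-inversion one must show that the Nisnevich and real \'etale topologies are compatible enough that motivic mapping spectra already agree with their real \'etale sheafifications, effectively reducing motivic information over $F$ to topology parametrized by the orderings of $F$. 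Quantifying this descent---likely through a convergent real \'etale descent spectral sequence whose $E_2$-page is a Galois-type cohomology of $X_F$, and checked first on the universal examples where $F$ is real closed (recovering classical $\SH$) or separably closed (giving the zero category)---is the technical crux.
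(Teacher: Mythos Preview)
The paper does not prove this theorem: it is stated with the citation \cite[Theorem 31]{bachmann:rho} and used as a black-box input to the arguments of \aref{subsec:integral1}. There is therefore no proof in the paper to compare your proposal against.

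For what it is worth, your outline is broadly in the spirit of Bachmann's actual argument (construct a change-of-topology functor, observe that $\rho$ becomes invertible real-\'etale locally because $\GG_m$ has the homotopy type of $\{\pm 1\}$ over a real closed field, then check the induced functor is an equivalence on compact generators). However, your proposed route to full faithfulness---computing $\rho$-inverted motivic homotopy sheaves via the slice filtration and fracture squares and matching them with cohomology of $X_F$---is not how Bachmann proceeds, and would be difficult to carry out independently of the result you are trying to prove. Bachmann instead works unstably first, proving that real-\'etale sheafification already identifies the $\rho$-local $S^1$-stable motivic category with $\SH(\Shv((\Spec F)_\ret))$, and then observes that $\PP^1$-stabilization adds nothing new on the real-\'etale side since $\GG_m$ is already invertible there. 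The reduction to real closed base fields you mention is indeed a key step, but it is handled via the stalk description of the real \'etale topos rather than via a descent spectral sequence.
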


Because of the relation $(2+\rho\eta)\eta = 0$, we see that $\rho$ is invertible whenever $2$ and $\eta$ are invertible.  From this, Bachmann derives the following corollary.

\begin{corollary}\label{cor:bachmann}
There are triangulated equivalences of categories
\[
  \SHA(F)[1/2,1/\eta] \simeq \SH(\Shv((\Spec F)_\ret))[1/2]\simeq \SH(\Shv(X_F))[1/2].
\]
\end{corollary}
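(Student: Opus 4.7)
The plan is to deduce the corollary from \aref{thm:bachmann} by identifying the two localizations $\SHA(F)[1/2,1/\eta]$ and $\SHA(F)[1/\rho,1/2]$, after which applying Bachmann's equivalence (with $2$ inverted) finishes both claimed equivalences. The central ingredient is Morel's relation $\eta(2+\rho\eta)=0$ in the bigraded ring $\pi_{*,*}\sphere$, which reflects the identity $2+\rho\eta = \langle 1 \rangle + \langle -1 \rangle = h$ (the hyperbolic form) under Morel's isomorphism $GW(F)\cong \pi_{0}\sphere$.

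The forward direction, indicated in the paper, is immediate: in $\SHA(F)[1/2,1/\eta]$, multiplying Morel's relation by $\eta^{-1}$ yields $2+\rho\eta=0$, hence $\rho=-2\eta^{-1}$ is a product of units, giving
\[
\SHA(F)[1/2,1/\eta] = \SHA(F)[1/\rho,1/2,1/\eta].
\]

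For the converse, I want $\eta$ to already be invertible in $\SHA(F)[1/\rho,1/2]$. It suffices to show that the hyperbolic form vanishes in $\SHA(F)[1/\rho]$; once that is in hand, $2+\rho\eta = 0$ there, so $\eta=-2\rho^{-1}$ becomes a unit after inverting $2$, yielding $\SHA(F)[1/\rho,1/2] = \SHA(F)[1/\rho,1/2,1/\eta]$. The vanishing $h=0$ in $\SHA(F)[1/\rho]$ is available from \aref{thm:bachmann}: under the equivalence $\SHA(F)[1/\rho]\simeq \SH(\Shv(X_F))$, the element $\rho$ corresponds to $-1$, and the induced ring map $GW(F)=\pi_{0}\sphere \to \pi_{0}\SH(\Shv(X_F))$ carries $h = \langle 1\rangle+\langle -1\rangle$ to $1+(-1)=0$ stalkwise on the Harrison space $X_F$. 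Combining the two steps yields
\[
\SHA(F)[1/2,1/\eta] = \SHA(F)[1/\rho,1/2,1/\eta] = \SHA(F)[1/\rho,1/2],
\]
and invoking \aref{thm:bachmann} with $2$ inverted on the right-hand side produces both equivalences.

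The main obstacle will be the second step: verifying that the hyperbolic form dies after $\rho$-inversion. If this is already recorded in Bachmann's paper (or follows formally from the real \'etale structure of $\SH(\Shv(X_F))$), the argument is essentially automatic; otherwise one needs to extract the identification $\pi_0\sphere[1/\rho]\cong W(F)$ by hand from the comparison theorem. The forward step alone only shows that $\SHA(F)[1/2,1/\eta]$ is a further localization of $\SH(\Shv(X_F))[1/2]$ rather than an equivalence, so it is the second step that carries the actual content.
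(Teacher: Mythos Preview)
Your argument is correct and matches the paper's approach: the paper simply records the forward implication (inverting $2$ and $\eta$ inverts $\rho$ via $(2+\rho\eta)\eta=0$) and then defers to Bachmann's paper for the full corollary, whereas you have supplied the missing converse direction explicitly. Your observation that the forward step alone only exhibits $\SHA(F)[1/2,1/\eta]$ as a further localization of $\SH(\Shv(X_F))[1/2]$, and that one genuinely needs $h=0$ in $\SHA(F)[1/\rho]$ to close the loop, is exactly right; this vanishing is indeed available from Bachmann's comparison (stalkwise the signature of $h=\langle 1\rangle+\langle -1\rangle$ is zero at every ordering), so your use of \aref{thm:bachmann} here is legitimate and not circular.
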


\begin{remark}
Note that when $X_F = *$, we have $\SH(\Shv(X_F)) = \SH$, the classical Spanier-Whitehead category.  When $X_F=*$ and $F$ admits a real embedding, the equivalences in \aref{thm:bachmann} and \aref{cor:bachmann} come from the real Betti realization functor \cite[\S 9]{bachmann:rho}.
\end{remark}

\subsection{Uncompletion}\label{subsec:integral1}

We now use Bachmann's theorem and several fracture squares to prove \aref{thm:integral1}.

\begin{proof}[Proof of \aref{thm:integral1}]
We prove the vanishing statement for $\ul\pi_\star\sphere$; the reader may check that an analogous argument easily covers the $p$-local version.

By \aref{lemma:vcd}, it suffices to show that $\ul\pi_\star \sphere_k$ obtains the stated vanishing range for all $k$ formally real with $\vcd_2 k<\infty$ or nonreal of characteristic $0$.  The vanishing range in \aref{thm:integral1} is a subset of the range from \aref{thm:integral0}, so the latter case is covered.  Now suppose $k$ is formally real with $\vcd_2 k<\infty$.  We claim that it suffices to check that the homotopy groups $\pi_\star\sphere_k$ obtain the vanishing range.  Indeed, if $E/k$ is a finitely generated field extension, \aref{lemma:ess} implies that $\ul\pi_{m+n\alpha}\sphere_k(\Spec E) = \pi_{m+n\alpha}\sphere_E$.  We either have that $E$ is nonreal of characteristic $0$ (and can invoke \aref{thm:integral0}), or that $E$ is formally real.  In the latter case, the results of \cite[\S II.4.1 \& II.4.2]{serre:gc} imply that $\vcd_2 E<\infty$, and we are still working with a homotopy group over a field satisfying our hypotheses.  Thus we have successfully reduced the problem to checking the vanishing range of $\pi_\star\sphere_k$ for $k$ formally real with $\vcd_2 k<\infty$.

Fix $F$ formally real with finite $\vcd_2$ and consider the following three homotopy pullback squares:
\[\xymatrix{
  \sphere\ar[r]\ar[d] &\hat\sphere\ar[d]\\
  \eta^{-1}\sphere\ar[r] &\eta^{-1}\hat\sphere\text{,}
}\qquad
\xymatrix{
  \eta^{-1}\sphere\ar[r]\ar[d] &\eta^{-1}\sphere\comp{2}\ar[d]\\
  \eta^{-1}\sphere[1/2]\ar[r] &\eta^{-1}\sphere\comp{2}[1/2]\text{,}
}\qquad
\xymatrix{
  \sphere\comp{2}\ar[r]\ar[d] &\sphere\comp{2,\eta}\ar[d]\\
  \eta^{-1}\sphere\comp{2}\ar[r] &\eta^{-1}\sphere\comp{2,\eta}.
}\]
The first is the $\eta$-primary fracture square for $\sphere$, the second is the $\eta$-periodization of the $2$-primary fracture square for $\sphere$, and the third is the $\eta$-primary fracture square for $\sphere\comp{2}$.  The vanishing ranges for $\pi_\star \hat\sphere$ and $\pi_\star \eta^{-1}\hat\sphere$ follow from \aref{thm:main}, so the first square implies that it suffices to check the vanishing range for $\pi_\star \eta^{-1}\sphere$.

This brings us to the second square.  We analyze the bottom row using \aref{cor:bachmann}, which tells us that $\pi_{m+n\alpha}\eta^{-1}\sphere[1/2] = 0$ if and only if $\sphere[1/2]\in \SH(\Shv(X_F))$ has $0$ as its $m$-th homotopy group.  We claim that this latter condition is obtained if and only if $\pi_m^{\top}\sphere[1/2] = 0$.  By the argument of \cite[Proposition 40]{bachmann:rho}, it suffices to check this condition when $F$ is real closed.  But then $X_F = *$ and $\SH(\Shv(X_F)) = \SH$, which is precisely the category in which $\pi_m^\top$ is computed.  By the same argument, $\pi_{m+n\alpha}\eta^{-1}\sphere\comp{2}[1/2] = 0$ if and only if $\pi_m^\top\sphere\comp{2}[1/2] = 0$.  (Note that by Serre finiteness, the set of such $m$ is a subset of those for which $\pi_m^\top\sphere[1/2] = 0$.)

It remains to check the vanishing range for $\pi_\star \eta^{-1}\sphere\comp{2}$.  Since $\vcd_2(F)<\infty$, \cite[Theorem 1]{MASSConv} implies that the top row of the third square is a $\pi_\star$-isomorphism, whence the bottom row is a $\pi_\star$-isomorphism as well.  In particular, $\pi_{m+n\alpha}\eta^{-1}\sphere\comp{2} = 0$ if and only if $\pi_{m+n\alpha}\eta^{-1}\sphere\comp{2,\eta} = 0$.  By \aref{thm:main}, this condition holds whenever $m<0$ or $m>0$, $m\equiv1$ or $2\pmod 4$, and $2n>\max\{3m+5,4m\}$.  It follows that $\pi_{m+n\alpha}\eta^{-1}\sphere = 0$ whenever $m<0$ or $m>0$, $m\equiv1$ or $2\pmod 4$,  $2n>\max\{3m+5,4m\}$, and $\pi_m^\top \sphere[1/2] =0$.  This concludes our proof.
\end{proof}

\section{Questions}\label{sec:q}
Here we present several natural questions raised by our work, along with some commentary.

\begin{question}
Given $m\in \ZZ$ such that the $m$-th $\eta$-complete Milnor-Witt stem $\ul\pi_{m+*\alpha}\hat\sphere$ is bounded above, what is the smallest $n\in \ZZ$ such that $\ul\pi_{m+n\alpha}\hat\sphere =0$?  If the $m$-th Milnor-Witt stem $\ul\pi_{m+*\alpha}\sphere$ is bounded above, what is the smallest $n$ such that $\ul\pi_{m+n\alpha}\sphere=0$?
\end{question}

\begin{remark}
The bounds presented here are far from optimal.  For instance, by \cite{RSO:pi1}, $\ul\pi_{1+3\alpha}\sphere = 0$, but the vanishing region of \aref{thm:integral1} is only obtained for $\ul\pi_{1+n\alpha}\sphere$ when $n>4$.  From the perspective of the slice spectral sequence, we lack both total information about the Novikov $E_2$-page and all the differentials in the spectral sequence.  While improvements on the vanishing range are no doubt possible via more nuanced slice arguments, it seems likely that different arguments would have to be invoked in order to find optimal bounds.
\end{remark}

Recall that $\omega\Ff$ denotes the contraction of a sheaf $\Ff$ \cite{morel:A1}.
\begin{question}
What constraints do the equations $\ul\pi_{m+n\alpha}\sphere = 0$ and $\omega\ul\pi_{m+(n-1)\alpha}\sphere \cong \ul\pi_{m+n\alpha}\sphere$ place on $\ul\pi_{m+(n-1)\alpha}\sphere$?
\end{question}

\begin{remark}
The above question is stated for homotopy sheaves of the sphere spectrum, but it could also be asked for abstract homotopy modules (objects in the heart of Morel's homotopy $t$-structure).  Matthias Wendt pointed out to the authors \cite[Lemma 3.7]{asokfasel} implies that $\omega\ul K_3^{\mathrm{ind}} = 0$, where $\ul K_3^{\mathrm{ind}}$ is the $3$-rd indecomposable $K$-sheaf, so it does not follow from abstract principles that the ``top'' sheaf in a Milnor-Witt stem is constant. Note that $\ul K_3^{\mathrm{ind}}$ coincides with the sheaf of integral motivic cohomology groups $\underline{H}^{1,2}$, which is non-zero and non-constant. For example, $H^{1,2}(\mathbb{Q})\iso\ZZ/24$ and $H^{1,2}(\mathbb{Q}(\sqrt{-1}))$ contains $\Z$ as a direct summand, see e.g., \cite[p.~542, 564]{merkurjev-suslin.k3}. All known top sheaves in Milnor-Witt stems are constant.
\end{remark}

We conclude by noting that the methods of \aref{sec:eta} imply another result, whose proof we only sketch.

\begin{theorem}\label{thm:etainv}
The natural map $\sphere\to\eta^{-1}\sphere$ induces an isomorphism $\ul\pi_{m+n\alpha}\sphere\cong\ul\pi_{m+n\alpha}\eta^{-1}\sphere$ whenever
\begin{itemize}
\item $m<0$, or
\item $m\ge 0$ and $2n>\max\{3m+5,4m\}$.
\end{itemize}
\end{theorem}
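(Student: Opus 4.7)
For $m<0$, both $\ul\pi_{m+n\alpha}\sphere$ and $\ul\pi_{m+n\alpha}\eta^{-1}\sphere=\colim_j\ul\pi_{m+(n+j)\alpha}\sphere$ vanish by Morel's connectivity theorem.

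For $m\ge 0$ and $2n>\max\{3m+5,4m\}$, I would adapt the slice-spectral-sequence strategy of \aref{thm:main}. Observe that $\sphere\to\eta^{-1}\sphere$ is an isomorphism on $\ul\pi_{m+n\alpha}$ if and only if each $\eta$-multiplication $\ul\pi_{m+k\alpha}\sphere\to\ul\pi_{m+(k+1)\alpha}\sphere$ is an isomorphism for $k\ge n$. Since the target region $T(\boldsymbol\alpha)=\{m+n\alpha:2n>\max\{3m+5,4m\}\}$ is closed under $n\mapsto n+1$, it suffices to show $\eta$-multiplication is an isomorphism for each $(m,n)\in T(\boldsymbol\alpha)$. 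The $\eta$-primary fracture square
\[\xymatrix{
  \sphere\ar[r]\ar[d] & \hat\sphere\ar[d]\\
  \eta^{-1}\sphere\ar[r] & \eta^{-1}\hat\sphere
}\]
is a homotopy pullback, so the cofiber of $\sphere\to\eta^{-1}\sphere$ is equivalent to that of $\hat\sphere\to\eta^{-1}\hat\sphere$; it is therefore enough to establish the $\eta$-isomorphism on $\ul\pi_{m+n\alpha}\hat\sphere$ in the region.

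The central input is that $\eta$-multiplication interacts transparently with the slice filtration: because $s_t\Sigma^{-\alpha}\sphere\simeq\Sigma^{-\alpha}s_{t+1}\sphere$, the map $\eta\colon\sphere\to\Sigma^{-\alpha}\sphere$ yields, on slice-spectral-sequence $E_1$-pages, a map $E_1^{m,n,t}\to E_1^{m,n+1,t+1}$ which corresponds under the Novikov-to-slice identification of \aref{thm:slice} to multiplication by $\alpha_1\in E_2^{1,2}(\MU)$. By \aref{lemma:AM}, $\alpha_1$-multiplication on $E_2^{s,2t}(\MU)$ is an isomorphism exactly in the Andrews-Miller range, which coincides under the shift $T$ with $T(\boldsymbol\alpha)$. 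So $\eta$-multiplication is an isomorphism on $E_1^{m,n,t}$ whenever $(m,n)\in T(\boldsymbol\alpha)$, and by naturality of the $d_r$-differentials this isomorphism should propagate to every $E_r$-page, provided one controls differentials originating from bidegrees outside the region.

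The main obstacle is precisely this control. Although $T(\boldsymbol\alpha)$ is closed under $m\mapsto m-1$ (so outgoing differentials $d_r\colon E_r^{m,n,t}\to E_r^{m-1,n,t+r}$ remain in the agreement region), it is not closed under $m\mapsto m+1$, and incoming differentials from $(m+1,n,t-r)$ may originate outside the Andrews-Miller range. A careful analysis using the Eilenberg-MacLane vanishing cone of \aref{lemma:EM} to constrain the Novikov bidegrees that can actually contribute to $E_1^{m+1,n,t-r}$ should show that such incoming contributions either already lie in the Andrews-Miller range or produce only vanishing differentials. Combined with convergence of the slice spectral sequence, this upgrades the $E_1$-level $\eta$-isomorphism to an isomorphism $\eta\colon\ul\pi_{m+n\alpha}\hat\sphere\xrightarrow{\sim}\ul\pi_{m+(n+1)\alpha}\hat\sphere$, and the fracture-square reduction then yields the desired comparison for $\sphere$.
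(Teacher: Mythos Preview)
Your approach mirrors the paper's own argument: reduce via the $\eta$-primary fracture square to $\hat\sphere \to \eta^{-1}\hat\sphere$, then compare slice spectral sequences using the Andrews--Miller isomorphism of \aref{lemma:AM} in the region $T(\boldsymbol\alpha)$. The paper phrases the second step as a direct comparison of the weight-$n$ slice spectral sequence for $\sphere$ with the weight-$n$ $\eta$-periodic slice spectral sequence rather than tracking $\eta$-multiplication one step at a time, but this is a repackaging of the same idea (and your observation $s_t\Sigma^{-\alpha}\sphere\simeq\Sigma^{-\alpha}s_{t+1}\sphere$ is exactly what makes the two formulations equivalent).

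You are right to flag the boundary-differential issue. The paper's argument is explicitly presented as a proof \emph{sketch} and does not address it either: it simply asserts that an $E_1$-isomorphism above the slice-to-Novikov shift of the Andrews--Miller region suffices, without explaining why incoming $d_r$'s from columns just outside $T(\boldsymbol\alpha)$ cannot spoil the comparison. Your proposed fix --- using the vanishing cone of \aref{lemma:EM} to bound which Novikov bidegrees actually contribute to $E_1^{m+1,n,t}$ --- is the natural line of attack, though neither you nor the paper carries it out in detail. Your proposal is therefore at the same level of completeness as the paper's, with the added virtue of naming the gap explicitly.
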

\begin{proof}[Proof Sketch]
By the $\eta$-primary fracture square, it suffices to prove the analogous result for $\hat\sphere\to\eta^{-1}\hat\sphere$.  

We compare the weight $n$ slice spectral sequence for $\sphere$ to the  weight $n$ $\eta$-periodic slice spectral sequence.  We get an isomorphism on $E_1$-pages above the slice-to-Novikov shift of the Andrews-Miller region from \aref{lemma:AM}.  This is precisely the region stated in the theorem.
\end{proof}

\begin{remark}
By Morel's computation of $\ul\pi_{n\alpha}\sphere$, the isomorphism in fact holds for $n>0$ when $m=0$.
\end{remark}

\begin{question}
For $m>0$, $2n>\max\{3m+5,4m\}$, and $n\equiv 0$ or $3\pmod 4$, what is $\ul\pi_{m+n\alpha}\sphere$?  What about $\ul\pi_{m+n\alpha}\hat\sphere$?
\end{question}
\begin{remark}
By \aref{thm:etainv}, this is equivalent to computing the homotopy sheaves of the $\eta$-periodic sphere spectrum.  
The global sections of these sheaves are computed for $F = \CC$ in \cite{AM}, and, for $F=\RR$, the $2$-complete global section computation appears in \cite{GI:etaR}.  
Calculations for $p$-adic fields $\QQ_{p}$ and the rational numbers $\QQ$ will appear in \cite{Wilson}.
Any sheaf computations and computations over a general field are completely open, except that we know we have vanishing under the conditions of \aref{thm:integral1}.

Note that the Andrews-Miller computation in \cite{AM} has nonzero groups for $m\equiv 0$ and $3\pmod{4}$, so the restrictions on $m$ in our vanishing theorems are necessary.
\end{remark}

\subsection*{Acknowledgements}
The authors gratefully acknowledge hospitality and support by the Mathematisches Forschungsinstitut Oberwolfach during Summer 2016 and the 
Institut Mittag-Leffler during Spring 2017. The first author was supported by NSF award DMS-1406327, the second author was supported by
the DFG priority programme ``Homotopy theory and algebraic geometry,''
and the third author was supported by the RCN 
Frontier Research Group Project no.~250399
``Motivic Hopf equations.''

\bibliographystyle{plain}
\bibliography{vanishing}

\end{document}